\DeclareMathSymbol{\shortminus}{\mathbin}{AMSa}{"39} % short minus
\newlength{\colwidth} 
\def\hspacing{14pt}
\newcommand{\marginparstretch}{0.6}
\let\oldmarginpar\marginpar
\renewcommand\marginpar[1]{\-\oldmarginpar[\framebox{\setstretch{\marginparstretch}\begin{minipage}{\marginparwidth}{\raggedleft\tiny #1}\end{minipage}}]{\framebox{\setstretch{\marginparstretch}\begin{minipage}{\marginparwidth}{\raggedright\tiny #1}\end{minipage}}}}
\numberwithin{equation}{section}
\theoremstyle{plain}
\newtheorem{theorem}{Theorem}[section]
\newtheorem{proposition}[theorem]{Proposition}
\newtheorem{lemma}[theorem]{Lemma}
\newtheorem{corollary}[theorem]{Corollary}
\theoremstyle{remark}
\newtheorem{remark}[theorem]{Remark}
\theoremstyle{definition}
\newtheorem{definition}[theorem]{Definition}
\newtheorem{example}[theorem]{Example}
\newtheorem{notation}[theorem]{Notation}
\newcommand\cO{\mathcal{O}}
\newcommand\cA{\mathcal{A}}
\newcommand\cB{\mathcal{B}}
\newcommand\cC{\mathcal{C}}
\newcommand\cK{\mathcal{K}}
\newcommand\cT{\mathcal{T}}
\newcommand\cX{\mathcal{X}}
\newcommand\cY{\mathcal{Y}}
\newcommand\cV{\mathcal{V}}
\newcommand\dcat[1]{\mathrm{D}^b{#1}}
\renewcommand\setminus{-}
\newcommand\rootstack[3]{\sqrt[#1]{#2/#3}}
\newcommand\stack[1][]{\rootstack{#1}{X}{D}}
\newcommand\nth[1][n]{{#1}^{\text{th}}}
\newcommand\Hom{\textrm{Hom}}
\newcommand\quotcoord{y}
\newcommand\rootn{n}
\newcommand\stackbrack\big
\newcommand\sodbrack\big
\newcommand\sodcomma{,\,}
\newcommand\sodembed{\iota}
\title{Root stacks and periodic decompositions}
\author{A.\ Bodzenta, W.\ Donovan}
\address{Agnieszka Bodzenta:
	Institute of Mathematics, 
	University of Warsaw \\ Banacha 2 \\ Warsaw 02-097,
	Poland.} \email{A.Bodzenta@mimuw.edu.pl}
\address{Will Donovan: Yau MSC, Tsinghua University, Haidian District, Beijing 100084, China; BIMSA, Yanqi Lake, Huairou District, Beijing 101408, China.}
\email{donovan@mail.tsinghua.edu.cn}
\subjclass[2020]{Primary 14F08; Secondary 14A20, 14C20, 18G80}
\begin{document}

\begin{abstract}
For an effective Cartier divisor $D$ on a scheme $X$ we may form an $\nth[\rootn]$~root stack. Its derived category is known to have a semiorthogonal decomposition with components given by $D$ and $X$. We show that this decomposition is $2\rootn$-periodic. For~$\rootn=2$ this gives a purely triangulated proof of the existence of a known spherical functor, namely the pushforward along the embedding of~$D$. For~$\rootn>2$ we find a higher spherical functor in the sense of recent work of Dyckerhoff, Kapranov and Schechtman~\cite{DKS}. We use a realization of the root stack construction as a variation of GIT, which may be of independent interest.
\end{abstract}
\maketitle
\thispagestyle{empty}

%\tableofcontents

\section{Introduction}

For an effective Cartier divisor $D$ on a scheme $X$ it is known that the derived pushforward functor from $\dcat{(D)}$ to $\dcat{(X)}$ is spherical. In this paper we explain how this spherical functor arises from a geometric 4-periodic semiorthogonal decomposition of~$\dcat{\stackbrack(\stack\stackbrack)}$, where $\stack$ is the square root stack for the divisor $D$. We furthermore exhibit a higher spherical functor, namely a \mbox{$2 \rootn$-spherical} functor, in this setting by giving an analogous $2\rootn$-periodic semiorthogonal decomposition of $\dcat{\stackbrack(\stack[\rootn]\stackbrack)}$ \mbox{for $n\geq 2$}. Along the way we improve on existing semiorthogonal decomposition results, in particular by working with the bounded coherent derived category, without smoothness assumptions.

\subsection{Spherical functors and periodic decompositions}
In the original formulation by R.~Anno and T.~Logvinenko~\cite{AnnoLogvinenko} a spherical functor is a DG functor $F\colon \mathrm{A} \to \mathrm{B}$ between pretriangulated DG categories with left and right adjoints~$L$ and~$R$, such that the cones of the $L\dashv F$ adjunction counit and unit induce autoequivalences of the homotopy categories of $\mathrm{A}$ and $\mathrm{B}$ respectively. In~\cite{HLShipman} D.~Halpern-Leistner and I.~Shipman proved that if $F$ is spherical then the category $\mathrm{T}$ glued from $\mathrm{A}$ and $\mathrm{B}$ via~$F$, see~\cite{KuzLun}, admits a 4-periodic semiorthogonal decomposition, see Definition \ref{def_periodic_SOD}.
\begin{align}\label{eqtn_4-per_SOD}
&\mathrm{T} = \langle \mathrm{A}, \mathrm{B} \rangle =  \langle \mathrm{B}, \mathrm{C} \rangle =  \langle \mathrm{C}, \mathrm{D} \rangle =  \langle \mathrm{D}, \mathrm{A} \rangle 
\end{align}
Moreover, given a 4-periodic semiorthogonal decomposition \eqref{eqtn_4-per_SOD}, the gluing functor $\sodembed_{\mathrm{A}}^R\sodembed_{\mathrm{B}}$ is spherical. These results suggest that one might consider an analogue of a spherical functor for triangulated categories without choosing a DG enhancement.

Similar ideas appeared in the work of  M.~Kapranov and V.~Schechtman who in~\cite{KapSche} noted that, given a 4-periodic semiorthogonal decomposition \eqref{eqtn_4-per_SOD} of a triangulated category $\mathrm{T}$ in a suitably-enhanced\footnote{For an explanation of the framework used for enhancement here, see \cite[Appendix~A]{KapSche}.} setting, the functor $\sodembed_{\mathrm{A}}^R\sodembed_{\mathrm{B}}$ is `spherical' in the sense that the cones of adjunction unit and counit are well-defined and are autoequivalences. Indeed, the functorial exact triangles for decompositions $\langle \mathrm{B}, \mathrm{C} \rangle $  and $\langle \mathrm{D}, \mathrm{A} \rangle$ respectively imply that the $\sodembed_{\mathrm{B}}^L{\sodembed_{\mathrm{A}}}\dashv \sodembed_{\mathrm{A}}^R {\sodembed_{\mathrm{B}}}$ adjunction unit and counit fit into functorial exact triangles as follows.
\begin{gather}
\sodembed_{\mathrm{A}}^R{\sodembed_{\mathrm{C}}}\sodembed_{\mathrm{C}}^R{\sodembed_{\mathrm{A}}} \to \textrm{Id}_{\mathrm{A}} \to \sodembed_{\mathrm{A}}^R{\sodembed_{\mathrm{B}}}\sodembed_{\mathrm{B}}^L{\sodembed_{\mathrm{A}}}\to \label{eqtn_cotwist_for_4-per}\\
\sodembed_{\mathrm{B}}^L{\sodembed_{\mathrm{A}}}\sodembed_{\mathrm{A}}^R{\sodembed_{\mathrm{B}}} \to \textrm{Id}_{\mathrm{B}} \to \sodembed_{\mathrm{B}}^L{\sodembed_{\mathrm{D}}} \sodembed_{\mathrm{D}}^L{\sodembed_{\mathrm{B}}} \to \label{eqtn_twist_for_4-per}
\end{gather}
The functorial cones of the unit and counit are compositions of mutation functors $\sodembed_{\mathrm{A}}^R{\sodembed_{\mathrm{C}}}$, $\sodembed_{\mathrm{C}}^R{\sodembed_{\mathrm{A}}}$, $\sodembed_{\mathrm{B}}^L{\sodembed_{\mathrm{D}}}$ and $\sodembed_{\mathrm{D}}^L{\sodembed_{\mathrm{B}}}$, hence they are equivalences by~\cite{Bon}. By analogy with this, given a \emph{triangulated} category $\mathrm{T}$ with a 4-periodic semiorthogonal decomposition as in~\eqref{eqtn_4-per_SOD}, we shall say that the gluing functor $\sodembed_{\mathrm{A}}^R\sodembed_{\mathrm{B}}$ is \emph{triangle-spherical}.

The first example of a geometric 4-periodic decomposition appeared in~\cite{BodBon}. There the appropriate quotient of the derived category of the fiber product of two varieties $X^+$ and $X^-$ related by a flop was proved to have a 4-periodic semiorthogonal decomposition with components $\dcat(X^{\pm})$ and $\dcat(\mathcal{A}_{f^{\pm}})$ for abelian null categories $\mathcal{A}_{f^{\pm}} \subset \textrm{Coh}(X^{\pm})$.

\subsection{Divisors and spherical functors}

A basic example of a spherical functor is $i_{D*} \colon \dcat(D)\to \dcat(X)$ for a scheme $X$ and the inclusion ${i_D} \colon D\to X$ of a Cartier divisor. This spherical functor appeared in the original paper of  R.~Anno~\cite{Anno} defining spherical functors and later in the work of N.~Addington~\cite{Addington}, though both statements were without proofs. Note that, in \cite{Anno} a DG enhancement was not present while in \cite{Addington} the author considered Fourier-Mukai kernels instead of functors between derived categories. In the case when both $D$ and $X$ are smooth, \mbox{A.~Bondal} and D.~Orlov~\cite{BonOrl} proved that the cone of the $i_D^* \dashv i_{D*}$ adjunction unit is an autoequivalence of $\dcat(X)$. Under the same assumptions, but allowing $D$ and~$X$ to be stacks, A.~Kuznetsov in~\cite[Proposition 3.4]{Kuz} proved the sphericity of~$i_{D*} $, compare~\cite[Lemma~2.8]{KuznetsovPerry}.

Without the smoothness assumption the first named author and A.~Bondal in~\cite[Theorem 3.1]{BodBon}  proved that, for unbounded derived categories of quasi-coherent sheaves, $i_{D*} \colon \mathrm{D}_{\textrm{QCoh}}(D) \to \mathrm{D}_{\textrm{QCoh}}(X)$ and its left adjoint form a spherical couple, i.e.~the adjunction unit and counit fit into functorial exact triangles as follows.
\begin{gather}
\big(\!- \otimes \,\cO_X(-D)\big) \to \textrm{Id} \to i_{D*}i_D^*\to\label{eqtn_cotw_for_divisor}\\
i_D^*i_{D*}  \to \textrm{Id} \to \big(\! - \otimes \,\cO_D(-D)[2]\big) \to \label{eqtn_tr_for_divisor}
\end{gather}
Note that a spherical couple, unlike a spherical functor, requires an adjoint on only one side. More precisely, it is a 2-categorical adjunction in the bicategory $\textbf{FM}$ whose objects are quasi-compact, quasi-separated schemes over a field while $\Hom_{\textbf{FM}}(X,Y)$ is the category $\mathrm{D}_{\textrm{QCoh}}(X \times Y)$. As the latter is triangulated, once a lifting of the adjunction (co)unit to a 2-morphism in $\textbf{FM}$ is fixed, one can consider the cone and check if it is an equivalence.

 \subsection{Root stacks and higher spherical functors}
 In this paper we consider the root stacks $\stack[\rootn]$ for $\rootn \geq 2$. Informally speaking, the root stack construction takes a scheme~$X$ and modifies it along an effective Cartier divisor~$D$ to get a stack with stabilizer groups $\mu_\rootn$ along $D$. As explained in Section~\ref{ssec_examples}, the stacky weighted projective line $\mathbb{P}(1,\rootn)$ is an instance of this construction, which becomes important and natural when passing from the setting of schemes to that of stacks.
 
  We prove that $\dcat{\stackbrack(\stack\stackbrack)}$ admits a 4-periodic semiorthogonal decomposition with components equivalent to $\dcat(D)$ and~$\dcat(X)$, with gluing functor $i_{D*}$ so that  in particular $i_{D*}$ is triangle-spherical, see below. More generally, we prove that $\dcat{\stackbrack(\stack[\rootn]\stackbrack)}$ admits a $2\rootn$-periodic semiorthogonal decomposition with $\rootn-1$ components equivalent to $\dcat(D)$ and one component equivalent to $\dcat(X)$. Such decompositions, without the periodicity statement, have previously been obtained  by A.~Ishii and K.~Ueda for smooth $D$ and~$X$~\cite[Theorem~1.6]{IshUed}, as well as by D.~Bergh, V.~Lunts and O.~Schn\"urer for perfect complexes~\cite[Theorem~4.7]{BLS}. The $2\rootn$-periodicity of a semiorthogonal decomposition for a root stack has previously been discussed by A.~Bondal~\cite{BondalTalk}, speaking on joint work with T.~Logvinenko.

In recent work, T.~Dyckerhoff, M.~Kapranov and V.~Schechtman define $N$-spherical functors of stable infinity categories~\cite{DKS}. In particular, their 4-spherical functors are analogues of spherical functors for DG categories. They prove that a semiorthogonal decomposition of a stable infinity category is $N$-periodic if and only if the gluing functor is $N$-spherical. Hence, one can think of the $2\rootn$-periodicity of the semiorthogonal decomposition of $\dcat{\stackbrack(\stack\stackbrack)}$ as $2\rootn$-sphericity of the gluing functor. 

\subsection{Variation of GIT}\label{sec.vgit} To construct the $2\rootn$-periodic decomposition we express the passage from $X$ to the root stack $\stack[\rootn]$ as a wall crossing in geometric invariant theory~(GIT), also known as a `variation of GIT'. There exists a general framework, see~\cite{HLShipman}, for associating spherical functors to \emph{balanced} wall crossings in GIT. The wall crossing in our case is not balanced, but for $\rootn=2$ a spherical functor still arises from it, as we shall see from the viewpoint of periodic decompositions.

 Previous work including~\cite{BFK,HL} has shown that stacks naturally arise when studying schemes and their derived categories. The present paper further illustrates this theme.

\subsection{Results}

For an effective Cartier divisor $D$ on a scheme $X$ we write $\sqrt[\rootn]{X/D}$ for the associated $\nth[\rootn]$~root stack. This may be described in a number of ways, which we explain in detail in Section~\ref{sect.rootstacks}. The most useful description for us is by a variation of~GIT for a $\mathbb{G}_m$-action on a scheme~$\cX$ defined as follows.

\begin{proposition}[Proposition~\ref{prop_quotients}]
	Write $\cX$ for the subscheme  $\{y z^\rootn = s\}$ of the total space of $\cO_X(D) \oplus \cO_X$ with  fiber coordinates $(y, z)$ and $s$ the canonical section of $\cO_X(D)$. Let $\mathbb{G}_m$ act fiberwise with weights $(-\rootn,1)$. Then the GIT quotients are as follows.
	\begin{equation*}
		[\cX^+/\mathbb{G}_m] \simeq \sqrt[\rootn]{X/D},\qquad 
		[\cX^-/\mathbb{G}_m] \simeq X.
	\end{equation*}

\end{proposition}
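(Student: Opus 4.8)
The statement may be checked Zariski-locally on $X$, so I would begin there: fix an affine open $\mathrm{Spec}\,A \subseteq X$ on which $\cO_X(D)$ is trivialised, so that the canonical section $s$ becomes some $f \in A$ cutting out $D$. Then $\cX$ restricts to $\mathrm{Spec}(A[y,z]/(yz^{\rootn} - f))$ with $\mathbb{G}_m$ acting by $\mathrm{wt}(y) = -\rootn$ and $\mathrm{wt}(z) = 1$, so that $yz^{\rootn}$ and $f$ both have weight $0$ and the action is defined on $\cX$. The character lattice of $\mathbb{G}_m$ has exactly two rays, hence two GIT chambers; a standard Hilbert--Mumford computation --- or, equivalently, inspection of the invariant sections of powers of the two linearisations --- shows that the unstable locus in the total space of $\cO_X(D)\oplus\cO_X$ is $\{z=0\}$ for one chamber and $\{y=0\}$ for the other, and that there are no strictly semistable points. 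As $\cX$ is a closed $\mathbb{G}_m$-invariant subscheme, its two semistable loci are the intersections $\cX^{+} = \cX \cap \{y\neq 0\}$ and $\cX^{-} = \cX \cap \{z\neq 0\}$, and the corresponding GIT quotients are the Deligne--Mumford quotient stacks $[\cX^{\pm}/\mathbb{G}_m]$.

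For $\cX^{-}$ the computation is immediate and global: on $\{z \neq 0\}$ the fiber coordinate $z$ is an invertible function, so the equation $yz^{\rootn} = s$ becomes $y = z^{-\rootn}s$, which identifies $\cX^{-}$ with the graph of this section over the complement $X \times \mathbb{G}_m$ of the zero section in the total space of $\cO_X$; in particular $\cX^{-} \cong X \times \mathbb{G}_m$. The residual $\mathbb{G}_m$-action is translation on the $\mathbb{G}_m$-factor (it has weight $1$ on $z$), which is free with quotient $X$; hence $[\cX^{-}/\mathbb{G}_m] \simeq X$.

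The substance of the proposition is the identification $[\cX^{+}/\mathbb{G}_m] \simeq \stack[\rootn]$, and here I see two routes. The first is a direct comparison of moduli functors: a morphism $T \to [\cX^{+}/\mathbb{G}_m]$ is a $\mathbb{G}_m$-torsor $P \to T$ together with a $\mathbb{G}_m$-equivariant map $P \to \cX^{+}$, and unwinding this data --- using that the coordinate $y$ is nowhere zero on $\cX^{+}$ and has weight $-\rootn$, while $z$ has weight $1$ --- produces a morphism $g\colon T \to X$, a line bundle $N$ on $T$ associated to $P$, a section $\tau \in \Gamma(T,N)$ coming from $z$, and an isomorphism $N^{\otimes \rootn} \xrightarrow{\ \sim\ } g^{*}\cO_X(D)$ coming from the trivialisation of $g^{*}\cO_X(D)$ furnished by $y$; the relation $yz^{\rootn} = s$ then says precisely that this isomorphism carries $\tau^{\otimes \rootn}$ to $g^{*}s$, which is the moduli description of $\stack[\rootn]$ recalled in Section~\ref{sect.rootstacks}. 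The second, more hands-on route runs on the affine chart above: the slice $\{y = 1\} \subset \cX^{+}|_{\mathrm{Spec}\,A}$ equals $\mathrm{Spec}(A[z]/(z^{\rootn}-f))$, is preserved by $\mu_{\rootn} \subset \mathbb{G}_m$, and the multiplication map $\mathbb{G}_m \times^{\mu_{\rootn}} \{y = 1\} \to \cX^{+}|_{\mathrm{Spec}\,A}$ sending $(t,z)$ to $(t^{-\rootn}, tz)$ is an isomorphism; therefore $[\cX^{+}|_{\mathrm{Spec}\,A}/\mathbb{G}_m] \simeq [\mathrm{Spec}(A[z]/(z^{\rootn}-f))/\mu_{\rootn}]$, which is the standard local chart of $\stack[\rootn]$, and one then checks that these charts glue compatibly.

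I expect the chamber analysis and the $\cX^{-}$ computation to be routine; the real work, and the main obstacle, is the $\cX^{+}$ identification. The delicate point is the weight $-\rootn$ on the coordinate $y$: it is precisely this twist that produces both the $\mu_{\rootn}$-stabilisers along $D$ and the $\rootn$-th root of $\cO_X(D)$, so one must be careful that the resulting isomorphism $N^{\otimes \rootn} \simeq g^{*}\cO_X(D)$ --- respectively, the gluing of the local charts --- is canonical and normalised consistently with the definition of $\stack[\rootn]$ adopted in Section~\ref{sect.rootstacks}, keeping track in particular of whether one lands on $\cO_X(D)$ or on $\cO_X(-D)$. Matching all the $\mathbb{G}_m$-weights against the $\mu_{\rootn}$-action in this way is where care is needed; everything else is formal.
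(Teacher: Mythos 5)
Your proposal is correct and your first route for $\cX^+$ is essentially the paper's own argument: the paper reduces the $(+)$ case to Proposition~\ref{prop_global_dscrp_of_root}, whose proof is exactly the moduli-functor comparison you sketch (with the one imprecision that $N$ should be the line bundle associated to $P$ via the weight $-1$ character, descending $\cO_{\cX^+}\langle -1\rangle$, so that $z$ gives the section $\tau$ and $y$ the isomorphism $N^{\otimes\rootn}\simeq g^*\cO_X(D)$ --- precisely the normalisation point you flag). Your $\cX^-$ computation likewise matches the paper's observation that $\cX_\rootn^-$ is a $\mathbb{G}_m$-torsor over $X$; the second, chart-based route you mention via $\{y=1\}$ and $\mu_\rootn$-quotients is a valid alternative but is not the one the paper takes.
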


Base changing the root stack construction to the divisor $D$ itself, we obtain a square 
\begin{equation*}
\begin{tikzpicture}[scale=2]
\node (0) at (-1.5,0) {$D$};
\node (3) at (-1.5,1.5) {$\sqrt[\rootn]{\cO_D(D)}$};
\node (1) at (0,0)  {$X$};
\node (2) at (0,1.5) {$\sqrt[\rootn]{X/D}$};
\draw[->] (0) -- node[below]{$i_D$} (1);
\draw[->] (2) -- node[right]{$p$} (1);
\draw[->] (3) -- node[left]{$q$} (0);
\draw[->] (3) -- node[above]{$i$} (2);
\end{tikzpicture}
\end{equation*}
where $\sqrt[\rootn]{\cO_D(D)}$ is the root stack of the given line bundle on $D$, see Definition \ref{def_root_of_inv_sh}.

\begin{theorem}[Theorem \ref{thm_SOD_for_r_stack}] We have semiorthogonal decompositions as follows.
\begin{align*}
\dcat{\stackbrack(\sqrt[\rootn]{X/D}\stackbrack)} 
& = \sodbrack\langle \dcat(X)\sodcomma \dcat(D)\sodcomma \dcat(D)\otimes \cO\langle 1 \rangle\sodcomma \ldots\sodcomma \dcat(D) \otimes \cO\langle \rootn-2 \rangle \sodbrack\rangle \\
& = \sodbrack\langle \dcat(D)\sodcomma\dcat(X)\otimes \cO\langle 1 \rangle\sodcomma  \dcat(D)\otimes \cO\langle 1 \rangle\sodcomma \ldots\sodcomma \dcat(D) \otimes \cO\langle \rootn-2 \rangle \sodbrack\rangle \\
& = \dots \\
& = \sodbrack\langle\dcat(D)\sodcomma \dcat(D)\otimes \cO\langle 1 \rangle\sodcomma \ldots\sodcomma \dcat(D) \otimes \cO\langle \rootn-2 \rangle\sodcomma \dcat(X) \otimes \cO\langle \rootn-1 \rangle \sodbrack\rangle
\end{align*}
Here the bundle $\cO\langle 1 \rangle$ on $\sqrt[\rootn]{X/D}$ is induced by $\mathbb{G}_m$-weight~$1$, and the embeddings of  $\dcat{(X)}$ and~$\dcat{(D)}$ are respectively $p^*$ and~$i_* q^*$.
\end{theorem}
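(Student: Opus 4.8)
The whole argument rests on the variation-of-GIT description of Proposition~\ref{prop_quotients}. Write $\mathfrak X := [\cX/\mathbb{G}_m]$ for the ambient quotient stack, which contains the two GIT opens $[\cX^+/\mathbb{G}_m] \simeq \sqrt[\rootn]{X/D}$ and $[\cX^-/\mathbb{G}_m] \simeq X$. I would proceed in four stages: (1) record the structural data on $\sqrt[\rootn]{X/D}$ supplied by this presentation; (2) establish the first displayed semiorthogonal decomposition; (3) deduce the others by mutation; (4) verify the $2\rootn$-periodicity of Definition~\ref{def_periodic_SOD}.

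For stage (1), the presentation yields: the line bundle $\cO\langle 1\rangle$ induced by weight $1$; the closed immersion $i\colon\sqrt[\rootn]{\cO_D(D)}\hookrightarrow\sqrt[\rootn]{X/D}$ of an effective Cartier divisor, with $\cO$ of that divisor isomorphic to $\cO\langle \pm 1\rangle$ and $\cO\langle \rootn\rangle\cong p^*\cO_X(\pm D)$; the Cartesian square in the statement, with $p$ flat, hence the base-change identity $\mathbf{L}i^*p^*\cong q^*\mathbf{L}i_D^*$ and the Grothendieck-duality formula $i^!\cong\mathbf{L}i^*(-)\otimes N[-1]$ with $N=i^*\cO(\text{that divisor})$; and the fact that $q\colon\sqrt[\rootn]{\cO_D(D)}\to D$ is a $\mu_\rootn$-gerbe, so that $\dcat(\sqrt[\rootn]{\cO_D(D)})$ is $\mathbb{Z}/\rootn$-graded by weight, each graded piece equivalent to $\dcat(D)$, with $\mathbf{R}q_*$ annihilating every weight not divisible by $\rootn$.

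For stage (2), the embeddings $p^*$ and $(-)\otimes\cO\langle j\rangle\circ i_*q^*$ are fully faithful by the projection formula together with $\mathbf{R}p_*\cO\cong\cO_X$ and $\mathbf{R}q_*\cO\cong\cO_D$. Semiorthogonality is then a direct computation: for each relevant ordered pair of components, push the $\mathrm{Hom}$ down through the adjunctions $i_*\dashv i^!$, $\mathbf{L}i^*\dashv i_*$, $p^*\dashv\mathbf{R}p_*$, $q^*\dashv\mathbf{R}q_*$ and the identities above; one always lands on $\mathbf{R}q_*$ of a line bundle $i^*\cO\langle k\rangle$ whose weight $k$, in the relevant range, is never divisible by $\rootn$, so the group vanishes. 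Generation --- that these components fill up $\dcat(\sqrt[\rootn]{X/D})$ --- is the substantive point, and I expect it to be the main obstacle. I would derive it from the grade restriction window (magic window) theorem of Halpern-Leistner and of Ballard--Favero--Katzarkov \cite{HL,BFK}, applied to the wall crossing of Proposition~\ref{prop_quotients}: the $\mathbb{G}_m$-fixed locus inside $\cX$ is $Z\cong D$, the destabilising normal direction has weight $-\rootn$ on the $\sqrt[\rootn]{X/D}$-side and weight $1$ on the $X$-side, so the pertinent window widths are $\rootn$ and $1$. Transporting the width-$\rootn$ window subcategory of $\dcat(\mathfrak X)$ across the restriction equivalence onto $\dcat(\sqrt[\rootn]{X/D})$, one finds it is assembled from the width-one window (which restricts to $\dcat(X)$, embedded via $p^*$) together with $\rootn-1$ further components --- one for each intervening weight, each supported on $Z$ and equivalent to $\dcat(D)$ with a successive $\cO\langle\cdot\rangle$-twist --- in exactly the order displayed. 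The obstacle is that $\cX$ need not be regular and $X$ is arbitrary, so the window theorem is not quotable off the shelf; one must check that it persists here, exploiting that $\cX$ is cut out by a single equation inside a total space smooth over $X$ --- equivalently, that $\sqrt[\rootn]{X/D}\to X$ is flat with complete-intersection fibres, so that everything is \'etale-locally $[\mathbb{A}^1/\mu_\rootn]\times(\text{affine})$ --- and that the Koszul resolutions underlying the window argument are built from the locally free normal bundle of the unstable stratum. (Alternatively one could prove generation by a resolution-of-the-diagonal argument adapted from Orlov's treatment of projective bundles, or by upgrading the perfect-complex statement of \cite{BLS}; the GIT route is the one fitting the framework of the introduction.)

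For stages (3) and (4), the remaining decompositions follow by iterated mutation of the first. The key computation is that the right mutation of the $\dcat(X)$-component through an adjacent copy of $\dcat(D)\langle j\rangle$ equals $\dcat(X)\langle j+1\rangle$, embedded by $p^*(-)\otimes\cO\langle j+1\rangle$ up to shift: this is read off from the divisor sequence $\cO\langle j\rangle\xrightarrow{\,u\,}\cO\langle j+1\rangle\to i_*(q^*\cO_D\otimes i^*\cO\langle j+1\rangle)$ tensored with $p^*F$, whose middle term is precisely the relevant $\iota\iota^{L}$ applied to $p^*F\otimes\cO\langle j\rangle$, while the same computation shows that mutation through a $\dcat(D)\langle j\rangle$ with $j$ incongruent to the current twist of the $\dcat(X)$-component is a plain transposition (the adjoint vanishes). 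Sliding the $\dcat(X)$-component one step rightwards at a time produces exactly the displayed chain of equalities. Finally, to get $2\rootn$-periodicity one tracks how the embedding of the $\dcat(X)$-component toggles between $p^*$ and $p^!$ as it crosses the cyclic boundary --- these differ by the relative dualising bundle $\omega_{\sqrt[\rootn]{X/D}/X}\cong\cO\langle\pm1\rangle$ --- which shows that a single circuit does not return to the starting decomposition but a double circuit does; this is also visible in the window picture of stage~(2), where it reflects the $2\rootn$-fold periodicity of the weight bookkeeping. Along the way one records that the gluing functor of the decomposition $\langle\dcat(X),\dcat(D),\ldots\rangle$ is $i_{D*}$ (for $\rootn=2$), recovering the known spherical functor.
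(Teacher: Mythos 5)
Your plan is substantially the same as the paper's: both proofs rest on the GIT presentation of Proposition~\ref{prop_quotients} and the window machinery of Halpern-Leistner. That said, there are a few points worth flagging where your proposal diverges from, or falls short of, the argument in the text.

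First, your worry that the window theorem ``is not quotable off the shelf'' because $\cX_\rootn$ is singular is addressed directly in the paper's formulation of Proposition~\ref{prop_HL_equiv}: the hypotheses extracted from~\cite{HL} are exactly~$(A)$, $(R)$ and $(L)$ --- that the unstable strata are affine bundles over $Z$, that $S^\pm\hookrightarrow\cX_\rootn$ are regular, and that the normal weights are positive --- and none of these require global regularity of $\cX_\rootn$ or $X$. The paper simply verifies $(A)$, $(R)$, $(L)$ in local coordinates over each $U_i$ (Propositions~\ref{prop_quotients} and~\ref{prop_window_width}), so the window theorem applies as stated. Your instinct that the regularity of the strata and the Koszul resolutions of the normal bundles are what really matter is exactly right; the point is just that~\cite{HL} already isolates these and you need not re-prove them.

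Second, the paper does not obtain the chain of decompositions by iterated mutation of the first one, but reads off the entire chain at once from \cite[Amplification~2.11]{HL}, which supplies the family of SODs of $\cC_{[0,\eta_-+\rootn-1)}$ with the $\cA_w$ placed at each position. Your mutation route is legitimate (indeed it is morally how one proves the Amplification), but be careful with the divisor sequence: the section cutting out $\sqrt[\rootn]{\cO_D(D)}$ is the weight-$1$ coordinate $z$, a section of $\cO\langle -1\rangle$, so the Koszul sequence runs $\cO\langle j+1\rangle\to\cO\langle j\rangle\to i_*i^*\cO\langle j\rangle$, the reverse of what you wrote; with the sign corrected, tensoring with $p^*F$ does give the mutation triangle $p^*F\otimes\cO\langle 1\rangle\to p^*F\to i_*q^*i_D^*F$ that moves $p^*\dcat(X)$ to $p^*\dcat(X)\otimes\cO\langle 1\rangle$ as you claimed.

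Third, the substantial computational content of the paper's proof, namely that under the restriction equivalence the windows $\cA_\omega$ and $\cC_{[\omega,\omega+1)}$ become $i_*q^*\dcat(D)\otimes\cO\langle\omega\rangle$ and $p^*\dcat(X)\otimes\cO\langle\omega\rangle$, is stated in your proposal only as an expectation. The paper devotes the second half of the proof to this: a flat base-change argument along the Cartesian square $S^\circ/\mathbb{G}_m\hookrightarrow\cX^+_\rootn/\mathbb{G}_m$ for the $\dcat(D)$ components, and an inverse to $i_-^*$ constructed via the projection $\rho\colon\cX_\rootn/\mathbb{G}_m\to X$ (following~\cite{CIJS}) for the $\dcat(X)$ component. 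Without this step the theorem's assertion that the embeddings are $p^*$ and $i_*q^*$ (rather than some abstract equivalences) is unproved, so your proposal leaves a genuine gap here.

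Finally, your stage (4) on $2\rootn$-periodicity belongs to Theorem~\ref{thm_2r_period}, not to the statement at hand, and in the paper periodicity is a short corollary of the present theorem plus Proposition~\ref{prop_intertwinement} rather than part of its proof.
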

The last decomposition, twisted by $\cO\langle 1-\rootn \rangle$, has been previously obtained but in the smooth setting~\cite[Theorem~1.6]{IshUed} or for perfect complexes~\cite[Theorem~4.7]{BLS}.

 \begin{remark} The similarity of these decompositions with the Orlov decomposition for a blowup provides part of the justification for thinking of $\stack[\rootn]$ as a stacky `blowup in codimension one'.\end{remark}

Our main theorem is then the following.

\begin{theorem}[Theorem \ref{thm_2r_period}]
	Take a full subcategory
	\[\mathcal{D} = \sodbrack\langle \dcat(D)\sodcomma \dcat(D) \otimes \cO\langle 1\rangle\sodcomma \ldots\sodcomma \dcat(D) \otimes \cO\langle \rootn-2 \rangle \sodbrack\rangle \subset \dcat{\stackbrack(\stack[\rootn]\stackbrack)}.\] 
	Then the semiorthogonal decomposition
	\begin{equation*}
		\dcat{\stackbrack(\stack[\rootn]\stackbrack)} = \langle \dcat(X), \mathcal{D} \rangle
	\end{equation*}
is $2\rootn$-periodic, in the sense that its $\nth[2\rootn]$ right dual decomposition is the original decomposition, see Definition \ref{def_periodic_SOD}. 

\end{theorem}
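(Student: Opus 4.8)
The plan is to unwind $2\rootn$-periodicity into a single equality of admissible subcategories, and then to read the subcategories in question directly off Theorem~\ref{thm_SOD_for_r_stack} together with the twist autoequivalence $(-)\otimes\cO\langle 1\rangle$.

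I would begin by recalling from Definition~\ref{def_periodic_SOD} that, starting from $\dcat{\stackbrack(\stack[\rootn]\stackbrack)} = \langle \dcat(X), \mathcal{D}\rangle$ and iterating passage to the right dual decomposition, one obtains a bi-infinite sequence $(\mathrm{A}_j)_{j\in\mathbb{Z}}$ of admissible subcategories with $\dcat{\stackbrack(\stack[\rootn]\stackbrack)} = \langle \mathrm{A}_j, \mathrm{A}_{j+1}\rangle$ for all $j$, so that $\mathrm{A}_{j+1} = \mathrm{A}_j^{\perp}$ and $\mathrm{A}_{j-1} = {}^{\perp}\mathrm{A}_j$, with $\mathrm{A}_0 = \dcat(X)$ and $\mathrm{A}_1 = \mathcal{D}$, and with passage to the right dual decomposition corresponding to the shift $j\mapsto j+1$. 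Since the whole sequence is determined by any one of its terms, the theorem amounts to the single identity $\mathrm{A}_{2\rootn} = \mathrm{A}_0$.

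Write $X_k$ for the essential image of $p^*(-)\otimes\cO\langle k\rangle$ in $\dcat{\stackbrack(\stack[\rootn]\stackbrack)}$ and $\mathcal{D}\langle k\rangle$ for the image of $\mathcal{D}$ under $(-)\otimes\cO\langle k\rangle$, so that $\mathrm{A}_0 = X_0$ and $\mathrm{A}_1 = \mathcal{D}$. The last decomposition of Theorem~\ref{thm_SOD_for_r_stack} reads $\langle \mathcal{D}, X_{\rootn-1}\rangle$ (its first $\rootn-1$ components being exactly $\mathcal{D}$), and this identifies $\mathrm{A}_2 = \mathcal{D}^{\perp} = X_{\rootn-1}$. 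Applying the autoequivalence $(-)\otimes\cO\langle k\rangle$ to both the first and the last decomposition of Theorem~\ref{thm_SOD_for_r_stack}, and using that autoequivalences preserve semiorthogonal decompositions, one sees that $\langle X_k, \mathcal{D}\langle k\rangle\rangle$ and $\langle \mathcal{D}\langle k\rangle, X_{\rootn-1+k}\rangle$ are decompositions for every $k\in\mathbb{Z}$. Feeding these into the recursion $\mathrm{A}_{j+1} = \mathrm{A}_j^{\perp}$, a short induction yields $\mathrm{A}_{2k} = X_{-k}$ and $\mathrm{A}_{2k+1} = \mathcal{D}\langle -k\rangle$ for all $k$. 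Here I use the standard property of root stacks that $\cO\langle \rootn\rangle \cong p^*\cO_X(D)$ (see Section~\ref{sect.rootstacks}): since $(-)\otimes\cO_X(D)$ is an autoequivalence of $\dcat(X)$, and since the projection formula together with $p\circ i = i_D\circ q$ gives $i_*q^*\big((-)\otimes\cO_D(D)\big)\simeq i_*q^*(-)\otimes\cO\langle \rootn\rangle$, the subcategories $X_k$ and $\mathcal{D}\langle k\rangle$ depend only on $k$ modulo $\rootn$. Hence $\mathrm{A}_{2\rootn} = X_{-\rootn} = X_0 = \mathrm{A}_0$, which proves the claim; specializing to $\rootn = 2$ recovers the $4$-periodicity of \eqref{eqtn_4-per_SOD} responsible for the sphericity of $i_{D*}$.

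I expect the substance of the argument to be bookkeeping rather than anything deep. The point demanding genuine care is the matching of orthogonality conventions, so that the last decomposition of Theorem~\ref{thm_SOD_for_r_stack} really computes $\mathrm{A}_2$ and not $\mathrm{A}_{-1}$ (the opposite orientation leads to the same conclusion by symmetry), and the verification that all the identifications above are of subcategories, the embedding functors being altered only by an autoequivalence of $\dcat(X)$ or of $\dcat(D)$. The isomorphism $\cO\langle \rootn\rangle \cong p^*\cO_X(D)$ is also precisely what pins the period at $2\rootn$: one right-dual step exchanges an $X$-type component for a $\mathcal{D}$-type one, so two steps are needed to move the $\dcat(X)$ component one place around the cycle, and $\rootn$ such double steps are needed before the accumulated twist $\cO\langle \rootn\rangle$ becomes trivial modulo line bundles pulled back from $X$.
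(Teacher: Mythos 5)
Your argument is correct and follows essentially the same route as the paper: iterate the right dual using the first and last decompositions of Theorem~\ref{thm_SOD_for_r_stack}, twisted by powers of $\cO\langle 1\rangle$, then use that $\cO\langle \rootn\rangle$ is pulled back from $X$ (Proposition~\ref{prop_intertwinement}) so that the accumulated twist after $2\rootn$ steps preserves the decomposition. One inessential sign slip: by Remark~\ref{rem_O(-r)} the isomorphism is $\cO\langle \rootn\rangle\simeq p^*\cO_X(-D)$, not $p^*\cO_X(D)$, though since both twists are autoequivalences of $\dcat(X)$ this does not affect the conclusion.
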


In particular, for $n=2$ we get a 4-periodic decomposition as in~\eqref{eqtn_4-per_SOD} 
\begin{equation}\label{eqtn_main_SOD} 
\begin{aligned} 
\dcat{\stackbrack(\stack \stackbrack)}  &= \sodbrack\langle \dcat(X) \sodcomma  \dcat(D) \sodbrack\rangle \\
&= \sodbrack \langle \dcat(D) \sodcomma \dcat(X)\otimes \cO\langle 1 \rangle \sodbrack \rangle\\
& = \sodbrack\langle \dcat(X) \otimes \cO\langle 1 \rangle \sodcomma  \dcat(D) \otimes \cO\langle 1 \rangle \sodbrack\rangle \\
 &= \sodbrack \langle \dcat(D)\otimes \cO\langle 1 \rangle \sodcomma
 \dcat(X)\sodbrack \rangle. 
\end{aligned}
\end{equation} 

We calculate the gluing functors for the above decompositions in Proposition \ref{prop_gluing_functor}. In~particular, we have the following.
\begin{corollary}[Corollary \ref{cor_div_is_sph}] The functor $i_{D*}\colon \dcat{(D)} \to \dcat{(X)}$ is triangle-spherical.
\end{corollary}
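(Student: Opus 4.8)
The plan is to read this off from the $\rootn=2$ case of Theorem~\ref{thm_2r_period}, combined with the general principle recalled in the introduction: a triangulated category carrying a $4$-periodic semiorthogonal decomposition as in~\eqref{eqtn_4-per_SOD} has triangle-spherical gluing functor. So the corollary is not really a new computation, but rather the statement that the abstract $4$-periodicity established in Theorem~\ref{thm_2r_period} matches, under the identifications of Theorem~\ref{thm_SOD_for_r_stack}, the classical spherical functor $i_{D*}$.

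Concretely, first I would specialize Theorem~\ref{thm_2r_period} to $\rootn=2$, obtaining the $4$-periodic chain~\eqref{eqtn_main_SOD} for $\mathrm{T} = \dcat{\stackbrack(\stack\stackbrack)}$, with components $\mathrm{A} = \dcat(X)$, $\mathrm{B} = \dcat(D)$, $\mathrm{C} = \dcat(X)\otimes\cO\langle 1\rangle$ and $\mathrm{D} = \dcat(D)\otimes\cO\langle 1\rangle$, so that the four successive decompositions are precisely $\langle\mathrm{A},\mathrm{B}\rangle$, $\langle\mathrm{B},\mathrm{C}\rangle$, $\langle\mathrm{C},\mathrm{D}\rangle$, $\langle\mathrm{D},\mathrm{A}\rangle$. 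All four components are admissible, so the relevant adjoint projections and mutation functors exist; plugging the two decompositions $\langle\mathrm{B},\mathrm{C}\rangle$ and $\langle\mathrm{D},\mathrm{A}\rangle$ into the functorial triangles~\eqref{eqtn_cotwist_for_4-per}--\eqref{eqtn_twist_for_4-per} exhibits the unit and counit of the adjunction $\sodembed_{\mathrm{B}}^L\sodembed_{\mathrm{A}} \dashv \sodembed_{\mathrm{A}}^R\sodembed_{\mathrm{B}}$ inside functorial triangles whose third terms are compositions of mutation functors; these are equivalences by~\cite{Bon}. Hence the gluing functor $\sodembed_{\mathrm{A}}^R\sodembed_{\mathrm{B}}\colon \dcat(D)\to\dcat(X)$ is triangle-spherical in the sense defined in the introduction.

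It then remains to identify this gluing functor with $i_{D*}$, which I would take from Proposition~\ref{prop_gluing_functor}. By Theorem~\ref{thm_SOD_for_r_stack} the embedding $\sodembed_{\mathrm{A}}$ of $\dcat(X)$ is $p^*$ and the embedding $\sodembed_{\mathrm{B}}$ of $\dcat(D)$ is $i_*q^*$; since $p^*\dashv p_*$ and $p^*$ is fully faithful we have $\sodembed_{\mathrm{A}}^R = p_*$. Thus the gluing functor is $p_*i_*q^*$. Using the commuting square, i.e. $p\circ i = i_D\circ q$, base change gives $p_*i_* \cong i_{D*}q_*$, and because $q$ is the root stack of a line bundle on $D$ one has $q_*\cO \cong \cO_D$, so $q_*q^* \cong \Id_{\dcat(D)}$ by the projection formula; therefore $p_*i_*q^* \cong i_{D*}$, and the corollary follows.

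I do not expect a genuine obstacle inside the corollary proper: the substantive inputs are the periodicity (Theorem~\ref{thm_2r_period}) and the gluing-functor identification (Proposition~\ref{prop_gluing_functor}), both available by assumption. The only point requiring mild care is bookkeeping in the triangulated framework — confirming that the mutation functors appearing in~\eqref{eqtn_cotwist_for_4-per}--\eqref{eqtn_twist_for_4-per} are defined and are equivalences in $\dcat{\stackbrack(\stack\stackbrack)}$, which is guaranteed by admissibility of the four components together with~\cite{Bon}, and checking that the adjoint $p_*$ genuinely preserves bounded coherent complexes so that it serves as $\sodembed_{\mathrm{A}}^R$ on $\dcat$; this holds as $p$ is proper of finite cohomological dimension.
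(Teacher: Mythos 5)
Your proposal is correct and follows essentially the same route as the paper: specialize Theorem~\ref{thm_2r_period} to $\rootn=2$ to obtain the $4$-periodic decomposition~\eqref{eqtn_main_SOD}, note that triangle-sphericity of the gluing functor is then immediate by definition, and identify the gluing functor $p_*i_*q^*$ with $i_{D*}$ via commutativity of the square~\eqref{eqn.basechange} and $q_*q^*\simeq\Id_{\dcat(D)}$. One small terminology remark: the isomorphism $p_*i_*\simeq i_{D*}q_*$ is just functoriality of derived pushforward (composition along the equal maps $p\circ i=i_D\circ q$), not base change — you do not need a Cartesian square or a flatness hypothesis for this step.
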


\subsection*{Notation} We write $\sqrt[\rootn]{X/D}$ for the $\nth[\rootn]$~root stack associated to a scheme $X$ with an effective Cartier divisor $D$. For a noetherian scheme or stack $X$ we denote by $\dcat(X)$ the bounded derived category of coherent sheaves on $X$. For the quotient stack of a scheme $\mathcal{X}$ with an action of~$G$ we write $[\mathcal{X}/G]$ or simply~$\mathcal{X}/G$. 

\subsection*{Acknowledgements}

The first author was partially supported by Polish National Science Centre grants 2018/31/D/ST1/03375 and 2021/41/B/ST1/03741. The second author was supported by Yau MSC, Tsinghua University, Yanqi Lake BIMSA, and the Thousand Talents Plan.
The authors are grateful for discussions with N.~Addington, R.~Anno, A.~Bondal, J.~Jelisiejew, T.~Kuwagaki, T.~Logvinenko and E.~Segal, and for helpful comments from an anonymous referee.

\section{Root stacks as variation of GIT}\label{sect.rootstacks}

We give definitions and a number of descriptions of the root stack.

\subsection{Root stacks}\label{sect.rootstackdef}
	
	Take an algebraically closed field $k$ of characteristic zero and a reduced separated noetherian $k$-scheme  $X$, with an effective Cartier divisor $D$ given on an open cover by~$\{U_i,f_i\}$. We denote by $s$ the canonical section of $\cO_X(D)$.

	Recall  that the quotient stack $[\mathbb{A}^1_k/\mathbb{G}_m]$ represents the functor which to a scheme~$B$ assigns the groupoid of generalized Cartier divisors $(L,\, \rho \colon L \to \cO_B)$ on $B$ with isomorphisms preserving the morphisms to the structure sheaf, see for instance \cite[Proposition 10.3.7]{Olss}. Let $\delta\colon X\to [\mathbb{A}^1_k/\mathbb{G}_m]$ be the morphism given by \[\big(\cO_X(-D),\,s(-D) \colon \cO_X(-D) \to \cO_X\big).\]
	
	\begin{definition}\cite{Cad, AbGrVi} The $\nth[\rootn]$ \emph{root stack} $\sqrt[\rootn]{X/D}$ for $\rootn\geq 1$ is given by the following fiber product, where $e_\rootn$ is induced by taking the $\nth[\rootn]$ power.
\begin{equation*}
    \begin{tikzpicture} [scale=2.5,xscale=1.25]
   \node (A) at (0,0) {$X$};
   \node (B) at (1,0) {$[\mathbb{A}^1_k/\mathbb{G}_m]$};
   \node (C) at (0,1) {$\stack[\rootn]$}; 
   \node (D) at (1,1) {$[\mathbb{A}^1_k/\mathbb{G}_m]$}; 
      \draw [->] (A) to node[below] {$\delta$} (B);
      \draw [->] (D) to node[right] {$e_\rootn$} (B);
      \draw [->] (C) to (D);
      \draw [->] (C) to node[left] {$p$} (A);
  \end{tikzpicture}
\end{equation*}
\end{definition}

\begin{notation} Write\, $\stack$ for the $\rootn=2$ case, namely the \emph{square root stack}.
\end{notation}

For a scheme $B$, objects of the groupoid $\sqrt[\rootn]{X/D}(B)$ are morphisms $f \colon B \to X $ and sections $t\colon \cO_B \to \mathcal{L}$ of invertible sheaves on $B$ together with an isomorphism $\mathcal{L}^{\otimes \rootn} \xrightarrow{\sim} f^*\cO_X(D)$ which identifies $t^{\otimes \rootn}$ with $f^*(s)$. Morphisms are isomorphisms of invertible sheaves $\mathcal{L} \xrightarrow{\sim} \mathcal{L}'$ commuting with all the additional data.

Recall that any invertible sheaf gives a $\mathbb{G}_m$-torsor and any $\mathbb{G}_m$-torsor can be obtained in this way. Hence,  the  stack $B\mathbb{G}_m$ represents the functor which to a scheme $B$ assigns the groupoid of invertible sheaves on $B$. Given $\mathcal{M} \in \textrm{Pic}(X)$ let $\mu\colon X\to B\mathbb{G}_m$ be the morphism given by $\mathcal{M}$.

\begin{definition}\cite{AbGrVi} \label{def_root_of_inv_sh}The $\nth[\rootn]$ \emph{root stack of a line bundle} $\sqrt[\rootn]{\mathcal{M}}$ for $\rootn\geq 1$ is given by the following fiber product, where $e_\rootn$ is induced by taking the $\nth[\rootn]$ power.
	\begin{equation*}
	\begin{tikzpicture} [scale=2.5,xscale=1.25]
	\node (A) at (0,0) {$X$};
	\node (B) at (1,0) {$B\mathbb{G}_m$};
	\node (C) at (0,1) {$\sqrt[\rootn]{\mathcal{M}}$}; 
	\node (D) at (1,1) {$B\mathbb{G}_m$}; 
	\draw [->] (A) to node[below] {$\mu$} (B);
	\draw [->] (D) to node[right] {$e_\rootn$} (B);
	\draw [->] (C) to (D);
	\draw [->] (C) to node[left] {$q$} (A);
	\end{tikzpicture}
	\end{equation*}
\end{definition}

For a base scheme $B$, an object of the groupoid $\sqrt[\rootn]{\mathcal{M}}(B)$ is a  morphism $f\colon B\to X$ and an invertible sheaf $\mathcal{L}$ on $B$ together with an isomorphism $\mathcal{L}^{\otimes \rootn} \xrightarrow{\sim} f^*\mathcal{M}$. Morphisms are isomorphisms of invertible sheaves $\mathcal{L} \xrightarrow{\sim} \mathcal{L}'$ commuting with the additional data.

\subsection{Quotient description of root stacks}

	The stack $\sqrt[\rootn]{X/D}$ can be also viewed as a quotient stack, following \cite{AbGrVi}. Write $\cT$ for the total space of $\cO_X(D)$ and $\cT^\circ$ for the total space of the associated $\mathbb{G}_m$-bundle, namely $\cT$ with the zero section removed. On $\cT^\circ \times \mathbb{A}^1_k$ consider the $\mathbb{G}_m$-action as follows.
	$$
	\lambda\cdot (u,z) = (\lambda^{-\rootn}u, \lambda z)
	$$
	Note that the morphism $\cT^\circ \times \mathbb{A}^1_k \to \cT$, $(u,z) \mapsto u z^\rootn$ is $\mathbb{G}_m$-invariant for the trivial $\mathbb{G}_m$-action on $\cT$, hence it gives a morphism $[\cT^\circ \times \mathbb{A}^1_k/\mathbb{G}_m] \to \cT$.

	\begin{proposition}\cite[Appendix  B.2]{AbGrVi}\label{prop_global_dscrp_of_root}
		Consider the canonical section $s$ of $\cO_X(D)$ as a subscheme of the total space~$\cT$, and let $\cV \subset \cT^\circ \times \mathbb{A}^1_k$ be its inverse image. Then \[ [\cV/\mathbb{G}_m] \simeq \sqrt[\rootn]{X/D}.\] 
	\end{proposition}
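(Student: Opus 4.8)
The plan is to prove the equivalence by comparing functors of points: for every $k$-scheme $B$ I would identify the groupoid $[\cV/\mathbb{G}_m](B)$ with the groupoid $\sqrt[\rootn]{X/D}(B)$ recalled above, that is, with the data of a morphism $f\colon B\to X$, a section $t\colon\cO_B\to\mathcal{L}$ of an invertible sheaf on $B$, and an isomorphism $\mathcal{L}^{\otimes\rootn}\xrightarrow{\sim}f^*\cO_X(D)$ carrying $t^{\otimes\rootn}$ to $f^*s$, and do so naturally in $B$. As both $[\cV/\mathbb{G}_m]$ and $\sqrt[\rootn]{X/D}$ are algebraic stacks, a natural equivalence of their functors of points is an equivalence of stacks, and the identification I construct will be visibly compatible with the two projections to $X$, so in particular it will recover the morphism $p$.

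First I would unwind the quotient stack: an object of $[\cV/\mathbb{G}_m](B)$ is a $\mathbb{G}_m$-torsor $P$ on $B$ together with a $\mathbb{G}_m$-equivariant morphism $\phi\colon P\to\cV\subset\cT^\circ\times\mathbb{A}^1_k$. Write $\mathcal{L}$ for the line bundle on $B$ determined by $P$. Composing $\phi$ with the projection to $\cT^\circ$ and then with $\cT^\circ\to X$ produces a morphism $f\colon B\to X$; since $\cT^\circ$ is by construction the $\mathbb{G}_m$-bundle of $\cO_X(D)$, equivariance of this first component for the weight $-\rootn$ part of the action is precisely the datum of an isomorphism $\mathcal{L}^{\otimes\rootn}\xrightarrow{\sim}f^*\cO_X(D)$, while equivariance of the second component $P\to\mathbb{A}^1_k$ for the weight $1$ part is precisely the datum of a section $t\colon\cO_B\to\mathcal{L}$. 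This is the step that uses the weights $(-\rootn,1)$.

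Next I would impose the equation defining $\cV$. The morphism $\cT^\circ\times\mathbb{A}^1_k\to\cT$, $(u,z)\mapsto uz^\rootn$, is $\mathbb{G}_m$-invariant, so its composite with $\phi$ descends to a morphism $B\to\cT$, that is, to a section of $f^*\cO_X(D)$, which under the isomorphism above is the image of $t^{\otimes\rootn}$. The requirement that $\phi$ factor through $\cV$, the preimage of the section $s$, says precisely that this section equals $f^*s$. Hence an object of $[\cV/\mathbb{G}_m](B)$ is exactly a tuple $\big(f,\mathcal{L},t,\mathcal{L}^{\otimes\rootn}\xrightarrow{\sim}f^*\cO_X(D)\big)$ with $t^{\otimes\rootn}\mapsto f^*s$, i.e.\ an object of $\sqrt[\rootn]{X/D}(B)$; and an isomorphism of $\mathbb{G}_m$-torsors over $B$ commuting with the morphisms to $\cV$ translates under this dictionary into an isomorphism $\mathcal{L}\xrightarrow{\sim}\mathcal{L}'$ commuting with all the data, i.e.\ a morphism in $\sqrt[\rootn]{X/D}(B)$. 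These assignments are manifestly functorial in $B$ and intertwine the two projections to $X$ (the invariant morphism $\cV\to X$ on one side, $p$ on the other), so they assemble into the claimed equivalence $[\cV/\mathbb{G}_m]\simeq\sqrt[\rootn]{X/D}$.

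The step I expect to need the most care is not conceptual but notational: one must fix, consistently throughout, the dictionary between $\mathbb{G}_m$-torsors, their associated line bundles and their trivializations, together with the sign normalization of the $\mathbb{G}_m$-weights, so that a morphism into $\cT^\circ$ equivariant for the weight $-\rootn$ action genuinely corresponds to an isomorphism $\mathcal{L}^{\otimes\rootn}\cong f^*\cO_X(D)$ matching the groupoid presentation of $\sqrt[\rootn]{X/D}$ recalled above, and likewise for the section $t$. Everything else is formal. As a consistency check I would observe that $\mathbb{G}_m$ acts on $\cV$ with stabilizer $\mu_\rootn$ exactly along $\{z=0\}$, a locus lying over $D$ since $uz^\rootn=s$ together with $u\ne0$ forces $s=0$ there, so that $[\cV/\mathbb{G}_m]$ acquires $\mu_\rootn$-inertia precisely along $D$, matching the expected shape of the root stack.
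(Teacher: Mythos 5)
Your proposal takes essentially the same functor-of-points route as the paper: both identify $[\cV/\mathbb{G}_m](B)$ with $\sqrt[\rootn]{X/D}(B)$ by translating a $\mathbb{G}_m$-torsor together with an equivariant map to $\cV$ into the data $(f,\mathcal{L},t,\mathcal{L}^{\otimes\rootn}\xrightarrow{\sim}f^*\cO_X(D))$, using the equivariance for weight $-\rootn$ and weight $1$ for the two components and the defining equation of $\cV$ for the condition $t^{\otimes\rootn}\mapsto f^*s$. The paper carries this out more explicitly in both directions, using the isomorphism $\pi^*\colon\mathrm{Pic}(B)\xrightarrow{\sim}\mathrm{Pic}_{\mathbb{G}_m}(C)$ and local coordinates to pin down the sign and torsor/line-bundle conventions that you correctly flag as the main point requiring care.
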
 
\begin{proof}
Let $u_i$ be a local coordinate on $\cT^\circ$, $z$ a coordinate on $\mathbb{A}^1_k$, and $\{U_i,f_i\}$ a representative of $D$. Then $\cV$ is glued from $\cV_i = \{u_i z^\rootn = f_i\}\subset \cT^\circ|_{U_i} \times \mathbb{A}^1_k$. Let $p \colon \cV \to X$  be the canonical projection.

Note that $z$ is a section of  the trivial line bundle $\cO_{\cV}\langle -1 \rangle$  with $\mathbb{G}_m$-weight~$-1$. \mbox{Functions $u_i$} glue to an isomorphism $\quotcoord\colon \cO_{\cV}\langle - \rootn \rangle\xrightarrow{\sim} \mathcal{N}$, for  a line bundle $\mathcal{N}$ with trivial \mbox{$\mathbb{G}_m$-action}. The composition $\cO_{\cV} \xrightarrow{z^{\otimes \rootn}} \cO_{\cV}\langle -\rootn \rangle \xrightarrow{\quotcoord} \mathcal{N}$ is, by the definition of $\cV$, the pullback $p^*(s)$ of the canonical section $s$ of $\cO_X(D)$. Hence, $\mathcal{N} \simeq p^*\cO_X(D)$ and $z$~defines a section of a line bundle on $\cV$ whose $\nth[\rootn]$ power is identified by the isomorphism~$\quotcoord$ with~$p^*(s)$.

To define a morphism $[\cV/\mathbb{G}_m]\to  \sqrt[\rootn]{X/D}$ consider a scheme $B$ and a morphism $B \to [\cV/\mathbb{G}_m]$, i.e.\  take $\pi$ a principal $\mathbb{G}_m$-bundle and $h$ a $\mathbb{G}_m$-equivariant morphism fitting into a commutative diagram as follows.
\begin{equation*}
\begin{tikzpicture} [scale=2]
\node (A) at (0,0) {$B$};
\node (B) at (1,0) {$X$};
\node (C) at (0,1) {$C$}; 
\node (D) at (1,1) {$\cV$}; 
\draw [->] (A) to node[below] {$f$} (B);
\draw [->] (D) to node[right] {$p$} (B);
\draw [->] (C) to node[above] {$h$} (D);
\draw [->] (C) to node[left] {$\pi$} (A);
\end{tikzpicture}
\end{equation*}
Note that $f$ exists as $p \circ h$ is $\mathbb{G}_m$-invariant and $\pi$ is the categorical quotient. Pullback of $z$ along $h$ yields a morphism $h^*(z) \colon \cO_C \to h^*\cO_{\cV}\langle -1 \rangle$ of $\mathbb{G}_m$-linearized line bundles on~$C$. As $\pi^*\colon \textrm{Pic}(B) \xrightarrow{\sim}\textrm{Pic}_{\mathbb{G}_m}(C)$ is an isomorphism, see  \cite[Proposition 3.3.1]{Brion}, there exists a morphism $t\colon \cO_B \to \mathcal{L}$ in $\textrm{Pic}(B)$ which pulls back to $h^*(z)$ along~$\pi$. Similarly, there exists an isomorphism $\widetilde{\quotcoord}\colon \mathcal{L}^{\otimes \rootn} \to f^*\cO_X(D)$ which pulls back to $h^*(\quotcoord) \colon h^*\cO_{\cV}\langle -\rootn \rangle \to h^*p^* \cO_X(D)$. Then $\widetilde{\quotcoord} \circ t^{\otimes \rootn}$ is the pullback via $f$ of the canonical section $s$ of $\cO_X(D)$. Indeed, applying the  isomorphism $\pi^*$, one gets $\pi^*(\widetilde{\quotcoord} \circ t^{\otimes \rootn}) = h^*(\quotcoord\circ z^{\otimes \rootn}) =h^*p^*(s) = \pi^*f^*(s)$.

In the opposite direction, let $f\colon B \to X$ be a morphism of schemes, $t \colon \cO_B \to \mathcal{L}$ a morphism of invertible sheaves and $a \colon \mathcal{L}^{\otimes \rootn} \xrightarrow{\sim} f^*\cO_X(D)$ an isomorphism such that $a \circ t^{\otimes \rootn} = f^*(s)$. Let $\pi\colon C\to B$ be the $\mathbb{G}_m$-torsor associated with $\mathcal{L}^{-1}$. Consider $\widetilde{h} \colon C\to \textrm{Tot}(\cO_X(D) \oplus \cO_X)$ given by an element of $\Hom(\cO_C, \pi^*f^*(\cO_X(D) \oplus \cO_X))$ with components
\begin{align*}
a\otimes (\mathcal{L}^{-1})^{\otimes \rootn} & \in \,\Hom(\cO_B, (\mathcal{L}^{-1})^{\otimes \rootn} \otimes f^* \cO_X(D)) \\
& \subset \bigoplus_{k\in \mathbb{Z}} \Hom(\cO_B, \mathcal{L}^{\otimes k} \otimes f^* \cO_X(D)) \\
& \simeq\Hom(\cO_B, \pi_* \cO_C \otimes f^*\cO_X(D))  \\
& \simeq \Hom(\cO_C, \pi^*f^*\cO_X(D)),
\end{align*} 
and
\begin{align*}
t  & \in\, \Hom(\cO_B, \mathcal{L}) \\
&\subset\bigoplus_{k\in \mathbb{Z}}\Hom(\cO_B, \mathcal{L}^{\otimes k}) \\
& \simeq \Hom(\cO_B, \pi_* \cO_C) \\ 
& \simeq \Hom(\cO_C, \cO_C).
\end{align*} 
The morphism $\widetilde{h}$ induces a $\mathbb{G}_m$-equivariant morphism $h\colon C\to \cV$. Indeed, in local coordinates $(b,u) \in B \times \mathbb{G}_m$, the morphism $\widetilde{h}$ is given by $(b,u)\mapsto (f(b), u^{-\rootn}a(b), u t(b))$. Hence,  $h$~corresponds to  $B \to [\cV/\mathbb{G}_m]$. 

It is straightforward to check that the above constructions yield mutually inverse maps $[\cV/\mathbb{G}_m]\leftrightarrow \sqrt[\rootn]{X/D}$.
\end{proof}
	
\begin{remark}
	The proof of Proposition \ref{prop_global_dscrp_of_root} implies that $\cO_{\cV}\langle -\rootn \rangle$ is isomorphic to the pullback of $\cO_X(D)$.
\end{remark}

To view $\sqrt[\rootn]{\mathcal{M}}$ as a global quotient stack, we have the following, by a similar argument to Proposition~\ref{prop_global_dscrp_of_root}.

\begin{proposition}\label{prop_glob_quot_for_line_bundle}
	Consider the $\mathbb{G}_m$-bundle $\mathcal{M}^\circ$ associated to~$\mathcal{M}$ with the $\mathbb{G}_m$-action $\lambda \cdot u = \lambda^{-\rootn} u$. Then
	\[[\mathcal{M}^\circ/\mathbb{G}_m] \simeq \sqrt[\rootn]{\mathcal{M}}.\]
\end{proposition}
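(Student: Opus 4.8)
The plan is to mirror the proof of Proposition~\ref{prop_global_dscrp_of_root}, simply deleting the section data: there is no divisor $D$ cutting out a subscheme and no $\mathbb{A}^1$-coordinate $z$ carrying a universal section, so the $\mathbb{G}_m$-bundle $\mathcal{M}^\circ$ itself plays the role that $\cV$ played there. First I would recall that an object of $\sqrt[\rootn]{\mathcal{M}}(B)$ over a scheme $B$ is a triple $(f\colon B\to X,\ \mathcal{L}\in\textrm{Pic}(B),\ a\colon \mathcal{L}^{\otimes\rootn}\xrightarrow{\sim}f^*\mathcal{M})$, while an object of $[\mathcal{M}^\circ/\mathbb{G}_m](B)$ is a pair $(\pi\colon C\to B,\ h\colon C\to\mathcal{M}^\circ)$ with $\pi$ a principal $\mathbb{G}_m$-bundle and $h$ a $\mathbb{G}_m$-equivariant morphism; the goal is then to produce mutually inverse, pullback-compatible constructions relating these two groupoids, naturally in $B$.

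The only genuinely geometric input needed is the following. Choosing an open cover $\{U_i\}$ of $X$ trivializing $\mathcal{M}$ gives coordinates $u_i$ on $\mathcal{M}^\circ|_{U_i}\simeq U_i\times\mathbb{G}_m$; with the action $\lambda\cdot u=\lambda^{-\rootn}u$ these have $\mathbb{G}_m$-weight $-\rootn$, so they glue to a $\mathbb{G}_m$-equivariant isomorphism $\cO_{\mathcal{M}^\circ}\langle -\rootn\rangle\xrightarrow{\sim}\rho^*\mathcal{M}$, where $\rho\colon\mathcal{M}^\circ\to X$ is the projection. This is the exact analogue of the isomorphism $\quotcoord$ in the proof of Proposition~\ref{prop_global_dscrp_of_root} and of the Remark following it.

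To build $[\mathcal{M}^\circ/\mathbb{G}_m]\to\sqrt[\rootn]{\mathcal{M}}$, given $(\pi\colon C\to B, h)$ I would note that $\rho\circ h$ is $\mathbb{G}_m$-invariant and hence descends along the categorical quotient $\pi$ to some $f\colon B\to X$; pulling back the isomorphism above along $h$ yields an isomorphism $\cO_C\langle -\rootn\rangle\xrightarrow{\sim}\pi^*f^*\mathcal{M}$ of $\mathbb{G}_m$-linearized invertible sheaves, and since $\pi^*\colon\textrm{Pic}(B)\xrightarrow{\sim}\textrm{Pic}_{\mathbb{G}_m}(C)$ is an isomorphism by \cite[Proposition 3.3.1]{Brion}, on writing $\cO_C\langle -1\rangle=\pi^*\mathcal{L}$ it descends to $a\colon\mathcal{L}^{\otimes\rootn}\xrightarrow{\sim}f^*\mathcal{M}$. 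In the reverse direction, given $(f,\mathcal{L},a)$, I would let $\pi\colon C\to B$ be the $\mathbb{G}_m$-torsor associated with $\mathcal{L}^{-1}$ and build $\widetilde h\colon C\to\textrm{Tot}(\mathcal{M})$ from the element $a\otimes(\mathcal{L}^{-1})^{\otimes\rootn}\in\Hom(\cO_B,(\mathcal{L}^{-1})^{\otimes\rootn}\otimes f^*\mathcal{M})\subset\bigoplus_{k\in\mathbb{Z}}\Hom(\cO_B,\mathcal{L}^{\otimes k}\otimes f^*\mathcal{M})\simeq\Hom(\cO_C,\pi^*f^*\mathcal{M})$, exactly as in Proposition~\ref{prop_global_dscrp_of_root} but with no $z$-component; since $a$ is an isomorphism $\widetilde h$ misses the zero section, in local coordinates it reads $(b,u)\mapsto(f(b),u^{-\rootn}a(b))$, which is $\mathbb{G}_m$-equivariant of weight $-\rootn$, so $\widetilde h$ factors through the required $h\colon C\to\mathcal{M}^\circ$.

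Finally I would check that the two constructions respect morphisms $B'\to B$ and are mutually inverse, which is formal and proceeds exactly as in the divisor case. The one place that demands attention — and the main obstacle, such as it is — is the bookkeeping of conventions: fixing the $\mathbb{G}_m$-weight ($-\rootn$ on the fiber coordinate of $\mathcal{M}^\circ$) and the line-bundle conventions ($\mathcal{L}$ versus $\mathcal{L}^{-1}$, $\mathcal{M}$ versus $\mathcal{M}^{-1}$) consistently, so that the isomorphism $\cO_{\mathcal{M}^\circ}\langle -\rootn\rangle\simeq\rho^*\mathcal{M}$ is normalized to match $\sqrt[\rootn]{\mathcal{M}}$ and not $\sqrt[\rootn]{\mathcal{M}^{-1}}$. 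Beyond that, nothing new happens relative to Proposition~\ref{prop_global_dscrp_of_root}.
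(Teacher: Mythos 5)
Your proposal is correct and follows essentially the same route as the paper's proof: the same two mutually-inverse constructions on $B$-points, the same use of the isomorphism $\pi^*\colon \textrm{Pic}(B)\xrightarrow{\sim}\textrm{Pic}_{\mathbb{G}_m}(C)$ from \cite[Proposition~3.3.1]{Brion} for descent, and the same explicit $\widetilde h$ built from $a\otimes(\mathcal{L}^{-1})^{\otimes\rootn}$ with local formula $(b,u)\mapsto(f(b),u^{-\rootn}a(b))$. The one spot where you differ — taking $C$ to be the torsor associated with $\mathcal{L}^{-1}$ rather than with $\mathcal{L}$ as the paper writes here — is a labeling convention on which the paper itself is not consistent between Propositions~\ref{prop_global_dscrp_of_root} and~\ref{prop_glob_quot_for_line_bundle}, and your local coordinate formula agrees with the paper's, so this is cosmetic rather than a gap.
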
	
\begin{proof}
	Let $p\colon \mathcal{M}^\circ\to X$ be the canonical projection. From the definition of $\mathcal{M}^\circ$ it follows that there exists an isomorphism $y \colon \cO_{\mathcal{M}^\circ}\langle -\rootn  \rangle \xrightarrow{\sim} p^*\mathcal{M}$.
	
	To define a morphism $[\mathcal{M}^\circ/\mathbb{G}_m] \to \sqrt[\rootn]{\mathcal{M}}$ consider a scheme $B$ and a morphism $B \to [\mathcal{M}^\circ/\mathbb{G}_m]$, i.e.\ take $\pi$ a principal $\mathbb{G}_m$-bundle and $h$ a $\mathbb{G}_m$-equivariant morphism fitting into a commutative diagram as follows.
	\begin{equation*}
	\begin{tikzpicture} [scale=2]
	\node (A) at (0,0) {$B$};
	\node (B) at (1,0) {$X$};
	\node (C) at (0,1) {$C$}; 
	\node (D) at (1,1) {$\mathcal{M}^\circ$}; 
	\draw [->] (A) to node[below] {$f$} (B);
	\draw [->] (D) to node[right] {$p$} (B);
	\draw [->] (C) to node[above] {$h$} (D);
	\draw [->] (C) to node[left] {$\pi$} (A);
	\end{tikzpicture}
	\end{equation*}
	Note that $f$ exists as $p \circ h$ is $\mathbb{G}_m$-invariant and $\pi$ is the categorical quotient.
Then $h^*(\cO_{\mathcal{M}^\circ}\langle -1 \rangle)$ is a $\mathbb{G}_m$-linearized line bundle on $C$, and further there is an isomorphism $h^*(y) \colon h^*(\cO_{\mathcal{M}^\circ}\langle- 1 \rangle)^{\otimes \rootn}\xrightarrow{\sim} h^*p^*\mathcal{M}$. It follows that there exists $\mathcal{L}\in\textrm{Pic}(B)$ with an isomorphism $\widetilde{y} \colon \mathcal{L}^{\otimes \rootn} \xrightarrow{\sim} f^*(\mathcal{M})$. Indeed, $\mathcal{L}$ and $\widetilde{y}$ are the preimages under the isomorphism $\pi^*\colon \textrm{Pic}(B) \xrightarrow{\sim}\textrm{Pic}_{\mathbb{G}_m}(C)$, see \cite[Proposition~3.3.1]{Brion},  of $h^* (\cO_{\mathcal{M}^\circ}\langle - 1 \rangle)$ and $h^*(y) \colon  h^*(\cO_{\mathcal{M}^\circ}\langle -1 \rangle)^{\otimes \rootn} \to h^*p^*(\mathcal{M}) \simeq \pi^*f^*(\mathcal{M})$.

	In the opposite direction, let $f\colon B \to X$ be a morphism of schemes, $\mathcal{L}\in \textrm{Pic}(B)$ and $a \colon \mathcal{L}^{\otimes \rootn} \xrightarrow{\sim} f^*\mathcal{M}$. Let $\pi\colon C\to B$ be the $\mathbb{G}_m$-torsor associated with $\mathcal{L}$. Consider $\widetilde{h} \colon C\to \textrm{Tot}(\mathcal{M})$ given by the following element of $\Hom(\cO_C, \pi^*f^*\mathcal{M})$.
	\begin{align*}
	a\otimes (\mathcal{L}^{-1})^{\otimes \rootn} & \in \,\Hom(\cO_B, (\mathcal{L}^{-1})^{\otimes \rootn} \otimes f^* \mathcal{M}) \\
	& \subset \bigoplus_{k\in \mathbb{Z}} \Hom(\cO_B, \mathcal{L}^{\otimes k} \otimes f^* \mathcal{M}) \\
	& \simeq\Hom(\cO_B, \pi_* \cO_C \otimes f^*\mathcal{M})  \\
	& \simeq \Hom(\cO_C, \pi^*f^*\mathcal{M})
	\end{align*} 
	The morphism $\widetilde{h}$ induces a $\mathbb{G}_m$-equivariant morphism $h\colon C\to \mathcal{M}^\circ$. Indeed, in local coordinates $(b,u) \in B \times \mathbb{G}_m$,  the morphism $\widetilde{h}$ is given by $(b,u)\mapsto (f(b), u^{-\rootn}a(b))$. Hence,  $h$~corresponds to  $B \to [\mathcal{M}^\circ/\mathbb{G}_m]$. 
	
	It is straightforward to check that the above constructions yield mutually inverse maps $[\mathcal{M}^\circ/\mathbb{G}_m]\leftrightarrow \sqrt[\rootn]{\mathcal{M}}$.
\end{proof}
	
	\begin{remark}\label{rem_O_r_for_inv_sh}
		The proof of Proposition \ref{prop_glob_quot_for_line_bundle} implies that $\cO_{\mathcal{M}^\circ}\langle -\rootn \rangle$ is isomorphic to the pullback of $\mathcal{M}$.
	\end{remark}

\subsection{GIT description of the root stack $\stack[\rootn]$}\label{subsec_GIT_of_root}

For a scheme $\cX$ with a $\mathbb{G}_m$-action, we denote the fixed locus by $Z$. Then  
\[
S^+= \big\{x\in \cX\,|\, \lim_{\lambda\to 0 } \lambda\cdot x \in Z\big\} \qquad\text{and} \qquad
S^-= \big\{x\in \cX\,|\, \lim_{\lambda\to 0} \lambda^{-1}\cdot x \in Z\big\}
\]
are the unstable loci. Write $\mathcal{X}^\pm = \mathcal{X} \setminus S^\pm$ for the semistable loci.  Note that here we allow finite stabilizers in these loci, so that the GIT quotients $[\cX^\pm/\mathbb{G}_m]$ may be Deligne--Mumford stacks, not necessarily schemes.

\smallskip

To construct the root stack as a GIT quotient, we make the following definition. This construction previously appeared, in a certain dimension~$2$ example for~$n=2$, in work of the second named author and T.~Kuwagaki~\cite[Proposition~5, second case]{DonovanKuwagaki}.

\begin{definition}\label{def.quotforroot} Let $\cT_\rootn$ be the total space of the rank 2 vector bundle $\cO_X(D) \oplus \cO_X $ with fiber coordinates $(y, z)$, and a fiberwise $\mathbb{G}_m$-action with weights $(-\rootn,1)$. Let $\cX_\rootn$ be the hypersurface given by \[\cX_\rootn = \{y z^\rootn = s\} \subset \cT_\rootn\] where $s$ is the canonical section of $\cO_X(D)$.\end{definition}

The equation $y z^\rootn = s$ is valued in $\cO_X(D)$ and is $\mathbb{G}_m$-invariant by construction. We explicitly describe $\cX_\rootn$ using the representative $\{U_i, f_i\}$ of $D$.  Classically, we can view the invertible sheaf $\cO_X(D)$ as an ideal sheaf in the sheaf $\cK_X$ of total fractions. Namely, take $\cO_X(D)$ to be the sub-$\cO_X$-module of  $\cK_X$ generated by $f_i^{-1}$ on $U_i$ \cite[Chapter~II.6]{Har}. Then multiplication with $f_i$ is an isomorphism $\zeta_i \colon \cO_{U_i}(D) \to \cO_{U_i}$. The invertible sheaf $\cO_X(D)$ has a canonical section $s = 1\in \Gamma(X, \cO_X(D))$. Note that $s|_{U_i}$ corresponds under~$\zeta_i$ to the regular function $f_i \in \Gamma(U_i, \cO_X)$. The trivializations $\zeta_i$ allow us to consider fiber coordinates $(y_i, z_i)$ on $\mathcal{T}_\rootn|_{U_i}$. The scheme $\cX_\rootn$ is then glued from $\{y_iz_i^\rootn = f_i\}\subset \mathcal{T}_\rootn|_{U_i}$.

\smallskip

Noting that the $\mathbb{G}_m$-action on $\cT_\rootn$ restricts to $\cX_\rootn$, we now describe the GIT quotients for~$\cX_\rootn$. 

\begin{proposition}\label{prop_quotients}
	Consider $\cX_\rootn$ from Definition~\ref{def.quotforroot} with its $\mathbb{G}_m$-action. The associated GIT quotients are as follows.
	\begin{itemize}
		\item[$(+)$] We have $[\cX_\rootn^+/\mathbb{G}_m] \simeq \sqrt[\rootn]{X/D}$. The unstable locus $S^+=\{y =0\}$ is isomorphic to the total space of $\cO_D$ with fiberwise $\mathbb{G}_m$-action of weight $1$.
		\item[$(-)$] We have $[\cX_\rootn^-/\mathbb{G}_m] \simeq X$. The unstable locus $S^-=\{z =0\}$ is isomorphic to the total space of $\cO_D(D)$ with fiberwise $\mathbb{G}_m$-action of weight $-\rootn$.
	\end{itemize}
The fixed locus $Z = \{ y, z = 0 \}$ is isomorphic to $D$.
\end{proposition}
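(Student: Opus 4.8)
The plan is to read off the $\mathbb{G}_m$-fixed locus and the unstable loci $S^\pm$ directly from the fiberwise weights $(-\rootn,1)$ on $\cT_\rootn$, and then to identify the two semistable quotients: $[\cX_\rootn^+/\mathbb{G}_m]$ via the global presentation of the root stack from Proposition~\ref{prop_global_dscrp_of_root}, and $[\cX_\rootn^-/\mathbb{G}_m]$ by a direct computation with a free $\mathbb{G}_m$-action.

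First I would compute the loci. Since both weights $-\rootn$ and $1$ are nonzero, the $\mathbb{G}_m$-fixed locus of $\cT_\rootn$ is the zero section, a copy of $X$; intersecting with $\cX_\rootn = \{yz^\rootn = s\}$ imposes $s = 0$, so $Z \simeq D$. For $S^+$, the limit $\lim_{\lambda\to0}(\lambda^{-\rootn}y,\lambda z)$ exists in $\cT_\rootn$ exactly when $y = 0$, and then lands in $Z$; hence $S^+ = \{y=0\}\cap\cX_\rootn$, on which the defining equation reads $s = 0$, so $S^+$ is the total space of $\cO_X|_D = \cO_D$ with fiber coordinate $z$ and $\mathbb{G}_m$-weight $1$. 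Symmetrically, $\lim_{\lambda\to0}(\lambda^{\rootn}y,\lambda^{-1}z)$ exists iff $z = 0$, so $S^- = \{z=0\}\cap\cX_\rootn$ is the total space of $\cO_X(D)|_D = \cO_D(D)$ with fiber coordinate $y$ and weight $-\rootn$. The explicit charts $\{y_iz_i^\rootn = f_i\}$ from the discussion after Definition~\ref{def.quotforroot} make these identifications transparent, and give $\cX_\rootn^+ = \cX_\rootn\cap\{y\neq0\}$, $\cX_\rootn^- = \cX_\rootn\cap\{z\neq0\}$.

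For the $(+)$ quotient, I would observe that the open subscheme $\{y\neq0\}\subset\cT_\rootn$ is canonically $\cT^\circ\times\mathbb{A}^1_k$, where $\cT^\circ$ is the $\mathbb{G}_m$-bundle associated to $\cO_X(D)$ as in Proposition~\ref{prop_global_dscrp_of_root}, with tautological coordinate $u = y$, and $z$ is the coordinate on $\mathbb{A}^1_k$; under this identification the $\mathbb{G}_m$-action becomes $\lambda\cdot(u,z) = (\lambda^{-\rootn}u,\lambda z)$ and the equation $yz^\rootn = s$ becomes precisely the condition cutting out $\cV$ in Proposition~\ref{prop_global_dscrp_of_root}. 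Hence $\cX_\rootn^+ = \cV$ as $\mathbb{G}_m$-schemes and $[\cX_\rootn^+/\mathbb{G}_m]\simeq\sqrt[\rootn]{X/D}$. For the $(-)$ quotient I would argue directly: on $\{z\neq0\}$ the action is free and of weight $1$ on the function $z$, so every orbit meets the slice $\{z=1\}$ exactly once, whence $[\cX_\rootn^-/\mathbb{G}_m]\simeq\cX_\rootn\cap\{z=1\} = \{y = s,\ z = 1\}\simeq X$, the last isomorphism being the projection to $X$. I expect the only real obstacle to be bookkeeping — consistently tracking the twist by $\cO_X(D)$, the tautological fiber coordinates, and the weight conventions when passing between the global picture and the local charts, in particular matching the $\mathbb{G}_m$-action on $\{y\neq0\}$ with the one used in Proposition~\ref{prop_global_dscrp_of_root}; once the charts are fixed as in the discussion after Definition~\ref{def.quotforroot}, the remaining verifications are formal.
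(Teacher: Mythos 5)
Your proof is correct and follows essentially the same strategy as the paper. The computation of $Z$, $S^\pm$, and the window loci from the fiberwise weights $(-\rootn,1)$ matches the paper verbatim, and for $(+)$ you make explicit the isomorphism $\cX_\rootn^+\simeq\cV$ (via the canonical splitting $\{y\ne0\}=\cT^\circ\times\mathbb{A}^1_k$) that the paper invokes by reference to Proposition~\ref{prop_global_dscrp_of_root}. The only small deviation is on the $(-)$ side: the paper shows $\cX_\rootn^-$ is a $\mathbb{G}_m$-torsor over $X$ and concludes by uniqueness of categorical quotients, whereas you exhibit the global slice $\{z=1\}$ directly (which works because $\cO_X$ is trivial, so $z$ is a global coordinate and the free action has an honest section over it); these are interchangeable phrasings of the same fact.
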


\begin{proof}
	We first describe the unstable loci for $\cX_\rootn$, which are given by restricting the unstable loci for $\cT_\rootn$. Since the $\mathbb{G}_m$-action on $\cT_\rootn$ is fiberwise, the unstable loci for a given linearization can be computed for the action of $\mathbb{G}_m$ on $\mathbb{A}^2_k$ with weights $(-\rootn,1)$ and coordinates $(y,z)$. The fixed locus of this action is $(0,0)$, with unstable loci $\{y = 0\}$ and $\{z =0\}$. For  $\mathcal{T}_n$ therefore the fixed locus $Z$ is the zero section, and the unstable loci over the open set~$U_i$ are $S^+= \{y_i = 0\}$ and $S^- =\{z_i =0\}$, so the description of the unstable loci follows.

	The rest of the $(+)$~side follows from an isomorphism of $\cX_\rootn^+$ and $\cV$ as \mbox{$\mathbb{G}_m$-schemes}, see Proposition \ref{prop_global_dscrp_of_root}. For the $(-)$~side, note that $\cX^-_\rootn$ is a $\mathbb{G}_m$-torsor over $X$. Indeed, $\mathbb{G}_m$~acts freely on the total space of the $\mathbb{G}_m$-bundle associated to $\cO_X$ and the coordinate~$y_i$ along the fiber of $\cO_X(D)$ is uniquely determined by a point of $X$ and the value of $z_i$ in the fiber over it. It follows that $X$ is the geometric quotient of $\cX_\rootn^-$ by $\mathbb{G}_m$, hence  a categorical one~\cite[Proposition 0.1]{MumFogKir}. As $[\cX_\rootn^-/\mathbb{G}_m]$ is the categorical quotient too, the required isomorphism follows from the uniqueness of the quotient.
\end{proof}

\begin{remark}\label{rem_O(-r)}
	Combining the above proposition and the proof of Proposition \ref{prop_global_dscrp_of_root} gives that $ \cO_{\cX_\rootn^+/\mathbb{G}_m}\langle -\rootn \rangle \simeq \cO_{\sqrt[\rootn]{X/D}}\langle -\rootn \rangle$ is isomorphic to the pullback of $\cO_X(D)$.
\end{remark}

\subsection{Toric example}\label{ssec_examples}

For concreteness, we give the following.

\begin{example} Let $X = \mathbb{P}^1$ with coordinates $(x_0:x_1)$, and let $D$ be the point~$(1:0)$ so that $\cO_X(D) \simeq \cO_X(1)$. Then $\stack[\rootn]$ is isomorphic to the stacky weighted projective line $\mathbb{P}(1,\rootn)$. To see this, note first that $X$ may be viewed as $[\mathbb{A}^2_k/\mathbb{G}_m] \setminus \{x_0,x_1=0\}$ where coordinates and $\mathbb{G}_m$-weights are as follows.
\settowidth{\colwidth}{$-\rootn$} % widest element in arrays
\[\begin{blockarray}{DD}
\small{x_0}&\small{x_1}&\\[1ex]
\begin{block}{(DD)}
1&1&\\
\end{block}\end{blockarray}\]
Then $\cX_\rootn$ with its $\mathbb{G}_m$-action may be presented as $\{ x_1 = yz^\rootn\} \subset [\mathbb{A}^4_k/\mathbb{G}_m^2] \setminus \{x_0,x_1=0\}$ with weights
\[\begin{blockarray}{DDDD}
\small{x_0}&\small{x_1}&\small{y}&\small{z}&\\[1ex]
\begin{block}{(DDDD)}
1&1&1&0&\\
0&0&-\rootn&1&\\
\end{block}\end{blockarray}\]
and $\mathcal{X}_\rootn^+$ is given by removing $\{y=0\}$. Changing basis in the torus $\mathbb{G}_m^2$ corresponds to row operations, so we may take weights as follows. 
\[\begin{blockarray}{DDDD}
\small{x_0}&\small{x_1}&\small{y}&\small{z}&\\[1ex]
\begin{block}{(DDDD)}
1&1&1&0&\\
\rootn&\rootn&0&1&\\
\end{block}\end{blockarray}\]
Now setting $y=1$ using the $\mathbb{G}_m$ factor corresponding to the first row, we deduce that $[\mathcal{X}_\rootn^+/\mathbb{G}_m] \simeq \{x_1 = z^\rootn\} \subset [\mathbb{A}^3_k/\mathbb{G}_m] \setminus \{x_0,x_1=0\}$ with weights below.
\[\begin{blockarray}{DDDDD}
&\small{x_0}&\small{x_1}&\small{z}&&\\[1ex]
\begin{block}{D(DDD)D}
&\rootn&\rootn&1&&\\
\end{block}\end{blockarray}\]
This is isomorphic to $[\mathbb{A}^2_k/\mathbb{G}_m] \setminus \{y_0,y_1=0\}$ with weights as follows, giving the claim.
\[\begin{blockarray}{DDDD}
&&\small{y_0}&\small{y_1}&&\\[1ex]
\begin{block}{D(DD)D}
&&\rootn&1&&\\
\end{block}\end{blockarray}\]

\end{example}

\section{Derived category of the root stack}

We first explain general theory, before applying to our setting.

\subsection{Derived category of GIT quotients}
\label{section.derivedcatGIT}

Take $\mathbb{G}_m$ acting on a scheme $\cX$ with fixed locus $Z$ and unstable loci $S^\pm$ as in Section \ref{subsec_GIT_of_root}. Recall that $\mathcal{X}^\pm = \mathcal{X} \setminus S^\pm$, and let $i_\pm \colon \cX^\pm \to \cX$ be the inclusions. Write $\pi_{\pm}\colon S^{\pm} \to Z$ for the maps which send $x$ to
\[\lim_{\lambda\to 0}\lambda\cdot x \qquad \text{and} \qquad \lim_{\lambda\to 0}\lambda^{-1}\cdot x\] respectively. 

\begin{proposition}\cite[Lemma 2.9, Theorem 2.10]{HL}\label{prop_HL_equiv}
	Assume 
	\begin{itemize}
		\item[$(A)$] $\pi_{\pm} \colon S^{\pm} \to Z$ are locally trivial bundles of affine spaces, and
		\item[$(R)$] the inclusions $S^{\pm}\to \cX$ are regular embeddings.
	\end{itemize}
	Under assumption $(R)$ the derived restriction along the closed immersions $Z\to S^{\pm}$ of the relative cotangent complex $L^\bullet_{S^{\pm}|\mathcal{X}}$ is $\det \mathcal{N}^\vee_{S^{\pm}|\mathcal{X}}|_Z[1]$. We make the following further assumption.
	\begin{itemize}
		\item[$(L)$] $\det \mathcal{N}_{S^{\pm}|\mathcal{X}}^{\mp 1}|_Z$ has positive $\mathbb{G}_m$-weight $\eta_\pm$.
	\end{itemize}	
Then the derived restriction functor $i_\pm^*$ gives an equivalence of 
\begin{equation}\label{eqtn_def_of_C_omega}
\mathcal{C}_{[\omega, \omega+\eta_{\pm})}= \big\{ E \in \dcat(\mathcal{X}/\mathbb{G}_m) \,\big|\, \mathcal{H}^\bullet ( i_Z^* E) \textrm{ have weights in $[\omega, \omega+\eta_{\pm})$} \big\}
\end{equation}
and $\dcat(\mathcal{X}^{\pm}/\mathbb{G}_m)$ where $i_Z$ is the closed immersion $Z \to \mathcal{X}$.
\end{proposition}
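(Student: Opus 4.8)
This is \cite[Lemma~2.9, Theorem~2.10]{HL}, so in practice one would simply invoke that reference; here I indicate the strategy of the argument I would follow. There are three ingredients: a Verdier quotient presentation of $\dcat(\mathcal{X}^\pm/\mathbb{G}_m)$, a \emph{baric} (weight) truncation structure on $\dcat(\mathcal{X}/\mathbb{G}_m)$ along the unstable stratum $S^\pm$, and the matching of the width of the window with $\eta_\pm$. For the first: since $\mathcal{X}^\pm=\mathcal{X}\setminus S^\pm$ is open with closed complement $S^\pm$, derived restriction $i_\pm^*$ is essentially surjective (by noetherianity any equivariant coherent sheaf on $\mathcal{X}^\pm$ extends equivariantly to $\mathcal{X}$), and its kernel is exactly the thick subcategory $\mathcal{K}_\pm\subset\dcat(\mathcal{X}/\mathbb{G}_m)$ of complexes with cohomology supported set-theoretically on $S^\pm$. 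Hence $i_\pm^*$ induces an equivalence $\dcat(\mathcal{X}/\mathbb{G}_m)/\mathcal{K}_\pm\xrightarrow{\sim}\dcat(\mathcal{X}^\pm/\mathbb{G}_m)$, and the content is to exhibit a section of this quotient --- which is what the window will provide.

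The main technical step is the baric structure. For $w\in\mathbb{Z}$ put $\mathcal{G}^\pm_{\geq w}=\{E: \text{the }\mathbb{G}_m\text{-weights of }\mathcal{H}^\bullet(Li_Z^* E)\text{ are }\geq w\}$ and define $\mathcal{G}^\pm_{<w}$ analogously, so that $\mathcal{C}_{[\omega,\omega+\eta_\pm)}=\mathcal{G}^\pm_{\geq\omega}\cap\mathcal{G}^\pm_{<\omega+\eta_\pm}$. I would first show that $\dcat(\mathcal{X}/\mathbb{G}_m)$ admits a semiorthogonal decomposition with factors $\mathcal{G}^\pm_{\geq w}$ and $\mathcal{G}^\pm_{<w}$, giving baric truncation functors $\beta^\pm_{\geq w}$, $\beta^\pm_{<w}$, and then analyse how these and the local cohomology $R\Gamma_{S^\pm}$ move $\mathbb{G}_m$-weights. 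This is where assumptions $(R)$ and $(A)$ enter: by $(R)$ the immersion $S^\pm\hookrightarrow\mathcal{X}$ is regular, so $\mathcal{O}_{S^\pm}$ is resolved by the Koszul complex on $\mathcal{N}^\vee_{S^\pm|\mathcal{X}}$; by $(A)$ the contraction $\pi_\pm\colon S^\pm\to Z$ is an affine-space bundle with $\mathbb{G}_m$ acting on fibres with weights of one sign, so complexes supported on $S^\pm$ are assembled, layer by layer in $\mathbb{G}_m$-weight, from $\dcat(Z/\mathbb{G}_m)$. Feeding in the identification of $Li_Z^* L^\bullet_{S^\pm|\mathcal{X}}$ with $\det\mathcal{N}^\vee_{S^\pm|\mathcal{X}}|_Z[1]$ from the statement together with $(L)$, one sees that the Koszul data $\wedge^\bullet\mathcal{N}^\vee_{S^\pm|\mathcal{X}}|_Z$ occupy a band of $\mathbb{G}_m$-weights of length exactly $\eta_\pm$; concretely, for $E$ in the window $R\Gamma_{S^\pm}(E)$ has $Z$-weights lying strictly outside $[\omega,\omega+\eta_\pm)$, on one side.

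Granting this, the window is a section. For full faithfulness of $i_\pm^*|_{\mathcal{C}_{[\omega,\omega+\eta_\pm)}}$, apply $\mathrm{RHom}_\mathcal{X}(C,-)$ to the triangle $R\Gamma_{S^\pm}(C')\to C'\to Ri_{\pm*}i_\pm^*C'$: the third term computes $\mathrm{RHom}_{\mathcal{X}^\pm}(i_\pm^*C,i_\pm^*C')$ by adjunction, so one needs $\mathrm{RHom}_\mathcal{X}(C,R\Gamma_{S^\pm}(C'))=0$ for $C,C'$ in the window, which is immediate from the weight analysis (objects of the window are right-orthogonal to objects of $\mathcal{K}_\pm$ whose $Z$-weights avoid $[\omega,\omega+\eta_\pm)$, and $R\Gamma_{S^\pm}(C')$ is such). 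For essential surjectivity, given $F\in\dcat(\mathcal{X}^\pm/\mathbb{G}_m)$ I would set $C=\beta^\pm_{<\omega+\eta_\pm}\beta^\pm_{\geq\omega}(Ri_{\pm*}F)$: this lies in the window by construction, and $i_\pm^*C\cong F$ since the baric truncations only modify the object along $S^\pm$, which $i_\pm^*$ discards. That this $C$ is a bounded coherent complex --- and not an infinite tail --- is exactly where the width of the window must equal $\eta_\pm$, ensuring only finitely many $\mathbb{G}_m$-weight layers of $Ri_{\pm*}F$ survive the truncation. This yields the asserted equivalence, with quasi-inverse $F\mapsto\beta^\pm_{<\omega+\eta_\pm}\beta^\pm_{\geq\omega}Ri_{\pm*}F$.

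The main obstacle is the second paragraph: constructing the baric semiorthogonal decomposition and, above all, the precise weight bookkeeping for $\mathcal{K}_\pm$ through the Koszul resolution of $\mathcal{O}_{S^\pm}$ and the affine bundle $\pi_\pm$, pinning the relevant band to length $\eta_\pm$, the $\mathbb{G}_m$-weight of $\det\mathcal{N}_{S^\pm|\mathcal{X}}^{\mp1}|_Z$. One must also take care that everything is done with bounded coherent, not unbounded quasi-coherent, derived categories: this is what forces the use of the finite window in place of the full (infinite) baric decomposition, and is the reason the hypotheses $(A)$, $(R)$, $(L)$ are imposed. Since these are precisely the conditions under which the argument is carried out in \cite{HL}, I would simply follow \cite[\S2]{HL}.
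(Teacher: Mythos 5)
Your approach is essentially the same as the paper's: both reduce to citing \cite{HL}. The paper's own proof is just a two-line deduction that makes the logical structure of the citation explicit: by \cite[Theorem~2.10]{HL}, conditions $(A)$ and $(L)$ give an equivalence of $\dcat(\cX^\pm/\mathbb{G}_m)$ with a window $\mathcal{G}_\omega\subset\dcat(\cX/\mathbb{G}_m)$ characterised by two separate weight conditions (weights of $\mathcal{H}^\bullet(k_\pm^* j_\pm^* E)$ are $\geq\omega$, and weights of $\mathcal{H}^\bullet(k_\pm^* j_\pm^! E)$ are $<\omega$, for $j_\pm\colon S^\pm\hookrightarrow\cX$ and $k_\pm\colon Z\hookrightarrow S^\pm$); then the argument of \cite[Lemma~2.9]{HL}, using $(R)$, shows $\mathcal{G}_\omega=\mathcal{C}_{[\omega,\omega+\eta_\pm)}$.

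One small inaccuracy in your sketch: you define $\mathcal{G}^\pm_{\geq w}$ and $\mathcal{G}^\pm_{<w}$ directly via $Li_Z^*$ and then propose to show these give a semiorthogonal decomposition of $\dcat(\cX/\mathbb{G}_m)$. That is not quite how the baric structure in \cite{HL} is set up: the two halves of the window are characterised via $j_\pm^*$ and $j_\pm^!$ respectively (equivalently via the baric truncations of $R\Gamma_{S^\pm}$), and these two restrictions restrict further to $Z$ differently. The single condition on $i_Z^* E$ appearing in $\mathcal{C}_{[\omega,\omega+\eta_\pm)}$ is only an equivalent reformulation of $\mathcal{G}_\omega$ \emph{after} assumption $(R)$ and the Koszul/weight calculation of \cite[Lemma~2.9]{HL} have been fed in --- the relative dualizing complex of $S^\pm\hookrightarrow\cX$ restricted to $Z$ is $\det\mathcal{N}^\vee_{S^\pm|\cX}|_Z[1]$, which has $\mathbb{G}_m$-weight shifted by $\pm\eta_\pm$, converting the $j_\pm^!$ bound into a bound on $j_\pm^*$ shifted by the window width. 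Your third paragraph does gesture at exactly this Koszul bookkeeping, so you are aware of the mechanism; the point is only that the $i_Z^*$-description is the \emph{output} of the argument, not a valid starting definition of the baric decomposition. Since you ultimately intend to invoke \cite{HL} verbatim, this does not affect correctness, but the paper's phrasing keeps the two steps cleanly separated.
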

\begin{proof}
	By  \cite[Theorem 2.10]{HL} the conditions $(A)$ and $(L)$ imply an equivalence of $\dcat(\cX^{\pm}/\mathbb{G}_m)$ with a subcategory  $\mathcal{G}_\omega$ of $\dcat(\cX/\mathbb{G}_m)$ given by
\begin{equation*}
\mathcal{G}_\omega= \left\{ E \in \dcat(\mathcal{X}/\mathbb{G}_m) \,\middle|\, \begin{array}{l} \mathcal{H}^\bullet ( k_\pm^* j_\pm^* E) \textrm{ have weights $\geq \omega$} \\ \mathcal{H}^\bullet ( k_\pm^* j_\pm^! E) \textrm{ have weights $< \omega$} \end{array}
 \right\}
\end{equation*}
where we notate morphisms as follows.
\[\begin{tikzpicture}[scale=1.75]
\node (0) at (-1.25,0) {$Z$};
\node (1) at (0,0)  {$S^\pm$};
\node (2) at (1.25,0) {$\mathcal{X}$};
\draw[right hook->] (0) -- node[below]{$k_\pm$} (1);
\draw[right hook->] (1) -- node[below]{$j_\pm$} (2);
\end{tikzpicture}\]
Then the argument  in the proof of \cite[Lemma 2.9]{HL} shows that if $(R)$ holds the category  $\mathcal{G}_\omega$ can be described as $\mathcal{C}_{[\omega, \omega+\eta_{\pm})}$ in \eqref{eqtn_def_of_C_omega}.
\end{proof}

\begin{remark} The positive integer $\eta_{\pm}$ is known as the window width. It is calculated in our setting in Proposition \ref{prop_window_width} below.\end{remark}

Under the assumptions $(A)$, $(R)$ and $(L)$ the references \cite{BFK,HL} furthermore give semiorthogonal decompositions of $\mathcal{C}_{[0,\eta_\pm+l)}$ for a positive integer~$l$. For simplicity of notation, we give the $(-)$~side, as this is the one we will use, and write $\eta=\eta_-$.
\begin{proposition}\cite[Amplification~2.11]{HL}\label{prop_HL_SOD}
	The full subcategories 
	\[\cC_{[0,\eta)}, \cC_{[1,\eta+1)}, \ldots,  \cC_{[l,\eta+l)} \subset \cC_{[0,\eta+l)}\]
	 can be completed to semiorthogonal decompositions
	\begin{align*}
	\cC_{[0,\eta+l)} & = \langle \cC_{[0,\eta)}, \cA_0, \ldots, \cA_{l-1}\rangle \\
	& = \langle \cA_0, \cC_{[1,\eta+1)}, \cA_1, \ldots, \cA_{l-1} \rangle \\
	& = \ldots \\
	& = \langle \cA_0, \cA_1, \ldots, \cA_{l-1}, \cC_{[l,\eta+l)}\rangle,
	\end{align*}
	where
	\[\mathcal{A}_w = \big\{ E \in \dcat(\mathcal{X}/\mathbb{G}_m) \,\big|\, \mathcal{H}^\bullet (i_Z^* E) \text{ have weights in $[w,w+\eta]$, $E$ supported on $S^-$} \big\}.\]
\end{proposition}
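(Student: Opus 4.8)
The plan is to deduce this from \cite[Amplification~2.11]{HL} by the same translation of window subcategories that was carried out in the proof of Proposition~\ref{prop_HL_equiv}. Under hypotheses $(A)$ and $(L)$, that reference already produces the whole displayed chain of semiorthogonal decompositions, not merely its two endpoints: the wide window $\cC_{[0,\eta+l)}$ contains $\dcat(\cX^-/\mathbb{G}_m)$ in the guise of $\cC_{[w,\eta+w)}$ for each of $w=0,\dots,l$, and the error pieces measure the discrepancy between consecutive guises, so that passing from one decomposition in the chain to the next is a mutation interchanging an error piece $\cA_w$ with the window and advancing it from $\cC_{[w,\eta+w)}$ to $\cC_{[w+1,\eta+w+1)}$. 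It is phrased there, however, with the grade-restriction windows $\mathcal{G}_w$ from the proof of Proposition~\ref{prop_HL_equiv} in place of $\cC_{[w,\eta+w)}$, and with the intermediate pieces described as certain full subcategories of $\dcat(\cX/\mathbb{G}_m)$ whose objects are supported on the unstable locus $S^-$.

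The first step is to invoke the identification $\mathcal{G}_w = \cC_{[w,\eta+w)}$, which is precisely the consequence of the regular-embedding hypothesis $(R)$ already established within the proof of Proposition~\ref{prop_HL_equiv}: there the condition on the weights of $j_-^! E$ is traded for one on the weights of $i_Z^* E$, the two windows differing by a shift equal to the $\mathbb{G}_m$-weight $\eta$ of $\det\mathcal{N}_{S^-|\cX}|_Z$ supplied by $(L)$. The second step is to run the same bookkeeping on the intermediate pieces: for $E$ supported on $S^-$ the weights of $i_Z^* E$ are governed by those of the restriction $j_-^* E$ together with an extra spread of length $\eta$ coming from the restriction to $Z$ of the conormal bundle of $S^-$ in $\cX$, so that the conditions ``supported on $S^-$, $i_Z^*$-weights in the length-$\eta$ interval $[w,w+\eta]$'' single out exactly the subcategory mutated out in passing from $\cC_{[w,\eta+w)}$ to $\cC_{[w+1,\eta+w+1)}$, namely the $\cA_w$ as stated. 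Semiorthogonality of the completed decompositions is then formal, following from disjointness of the relevant weight windows together with adjunction along the regular embedding $S^- \hookrightarrow \cX$, exactly as in \cite{BFK,HL}.

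The point I expect to require genuine care is this second step: matching the subcategory of $\cC_{[0,\eta+l)}$ cut out by the stated support-and-weight conditions with the intermediate category of \cite[Amplification~2.11]{HL}, i.e.\ checking that it is generated as an iterated extension by pushforwards from $S^-$ of coherent sheaves of the expected weight, and conversely. Granting $(A)$, $(R)$ and $(L)$ this should be routine --- restriction along the regular embedding $j_-$ has finite tor-dimension, and the remaining restrictions and pushforwards relating $\cX$, $S^-$ and $Z$ are well behaved on the bounded-weight subcategories in play --- but it is the step that really uses the stated hypotheses, and so is where I would concentrate the write-up.
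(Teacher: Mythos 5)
Your proposal is correct and is essentially the paper's own treatment: the paper does not give a proof environment for this proposition, but leans on the same citation to \cite[Amplification~2.11]{HL}, and the paper's Remark after Lemma~\ref{lem_D(Z)_A_omega} carries out precisely the translation you describe (identifying $\mathcal{G}_\omega$ with $\cC_{[\omega,\omega+\eta_\pm)}$ via $(R)$ and \cite[Lemma~2.9]{HL}, and identifying $\cA_\omega$ with the intermediate category via \cite[Amplifications~3.18, 3.27]{HL}). The only cosmetic difference is that you locate the careful step in matching $\cA_w$ directly with the intermediate piece of Amplification~2.11, whereas the paper routes through the explicit support-restricted category $\mathrm{D}^b_{S^-/\mathbb{G}_m}(\cX/\mathbb{G}_m)_\omega$ of \cite[Amplification~3.27]{HL} as an intermediary; this is the same content.
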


Finally, there is an equivalence of $\dcat(Z)$ with $\mathcal{A}_w$. For this, set notation
\[\begin{tikzpicture}[scale=2.5]
\node (X) at (-2.25,0) {$Z$};
\node (0) at (-1.25,0) {$Z/\mathbb{G}_m$};
\node (1) at (0,0)  {$S^-/\mathbb{G}_m$};
\node (2) at (1.25,0) {$\mathcal{X}/\mathbb{G}_m$};
\draw[<-] (X) -- node[below]{$\tau$} (0);
\draw[<-] (0) -- node[below]{$\pi_-$} (1);
\draw[right hook->] (1) -- node[below]{$j$} (2);
\end{tikzpicture}\]
where $\tau$ corresponds to the quotient $\mathbb{G}_m/\mathbb{G}_m\xrightarrow{\sim} 1$.  Then we have the following.
\begin{lemma}\cite[Remark 2.13, Corollary 3.28]{HL}\label{lem_D(Z)_A_omega}
	The functor 
	\begin{equation}
	\label{eqn.embedA}
	\Phi_{\omega}(-) = j_*\pi_-^*(\tau^*(-)\otimes \cO_{Z/\mathbb{G}_m}\langle \omega\rangle)
	\end{equation} 
	is an equivalence of $\dcat(Z)$ with $\cA_{\omega} \subset \dcat(\cX/\mathbb{G}_m)$.	
\end{lemma}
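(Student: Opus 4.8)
The plan is to factor $\Phi_\omega$ according to the three maps $\tau$, $\pi_-$ and $j$ used to build it, and to handle each in turn. Because $\mathbb{G}_m$ acts trivially on the fixed locus $Z$ we have $Z/\mathbb{G}_m \simeq Z\times B\mathbb{G}_m$, so $\dcat(Z/\mathbb{G}_m)$ splits, by $\mathbb{G}_m$-weight, as an orthogonal direct sum of copies of $\dcat(Z)$, and the first factor $E\mapsto \tau^*E\otimes\cO_{Z/\mathbb{G}_m}\langle\omega\rangle$ is exactly the inclusion of the weight-$\omega$ copy, in particular fully faithful. It therefore suffices to prove that $j_*\pi_-^*$, restricted to this copy, is fully faithful with essential image $\cA_\omega$.

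First I would establish full faithfulness by a direct computation of morphism spaces. Writing $F'$, $G'$ for the weight-$\omega$ lifts of objects $F, G\in\dcat(Z)$, adjunction for the closed immersion $j$ gives $\Hom_{\cX/\mathbb{G}_m}(j_*\pi_-^*F', j_*\pi_-^*G') \simeq \Hom_{S^-/\mathbb{G}_m}(Lj^*j_*\pi_-^*F', \pi_-^*G')$, and by assumption $(R)$ the Koszul resolution of $\cO_{S^-}$ over $\cO_\cX$ is finite, so $Lj^*j_*(-)$ is the derived tensor product with the exterior algebra of $\mathcal{N}^\vee_{S^-|\cX}$. Pushing the resulting groups down along $\pi_-$, the projection formula together with assumption $(A)$ — which identifies $R\pi_{-*}\cO_{S^-/\mathbb{G}_m}$ with the symmetric algebra of the conormal bundle of $Z$ in $S^-$, a sheaf whose weight-$0$ part is $\cO_Z$ and whose other $\mathbb{G}_m$-weights all have one sign, since the fibres of $S^-\to Z$ carry only negative $\mathbb{G}_m$-weights by definition of $S^-$ — brings everything onto $Z/\mathbb{G}_m$, where the weight grading forces every term other than the leading one into a weight different from that of $F'$, hence to vanish. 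What survives is $\Hom_{Z/\mathbb{G}_m}(F', G') = \Hom_Z(F, G)$. One checks along the way, using $(A)$ and $(R)$, that the functors in play preserve coherence and boundedness, so that we stay inside $\dcat$.

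Next I would identify the essential image with $\cA_\omega$. That $\Phi_\omega(F)$ is supported on $S^-$ is immediate. For the weight condition, $i_Z^*\Phi_\omega(F) \simeq k_-^*Lj^*j_*\pi_-^*F'$, which the same Koszul computation identifies with $F'$ tensored with the restriction to $Z$ of the exterior algebra of $\mathcal{N}^\vee_{S^-|\cX}$; since $\det\mathcal{N}_{S^-|\cX}|_Z$ has $\mathbb{G}_m$-weight $\eta$ by assumption $(L)$, the weights of these exterior powers span an interval of length $\eta$ adjacent to $\omega$, so the cohomology sheaves of $i_Z^*\Phi_\omega(F)$ lie in the window that defines $\cA_\omega$. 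Thus $\Phi_\omega$ factors through $\cA_\omega$, and being fully faithful it embeds $\dcat(Z)$ as a full triangulated subcategory of it.

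The hard part is the reverse inclusion, that this subcategory exhausts $\cA_\omega$; here I would argue by dévissage. Any object of $\dcat(\cX/\mathbb{G}_m)$ supported on $S^-$ is an iterated cone of sheaves pushed forward from $S^-$ along $j$, obtained by filtering along the powers of the ideal of $S^-$, with associated graded pieces that are sheaves on $S^-$ twisted by $\mathrm{Sym}^\bullet\mathcal{N}^\vee_{S^-|\cX}$; and by assumption $(A)$ each sheaf on $S^-/\mathbb{G}_m$ is in turn built from pullbacks along $\pi_-$ of the weight-homogeneous pieces of $\dcat(Z/\mathbb{G}_m)$. Imposing the weight bounds defining $\cA_\omega$ — the window being precisely the width $\eta$ of $\det\mathcal{N}_{S^-|\cX}|_Z$ — a count of $\mathbb{G}_m$-weights shows that every layer outside the one in the image of $\Phi_\omega$ is excluded, so each object of $\cA_\omega$ is genuinely of the form $\Phi_\omega(F)$. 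Making this weight count precise, and checking that the filtration actually collapses rather than merely generating a subcategory, is the delicate point, and is exactly the content of \cite[Remark~2.13, Corollary~3.28]{HL}, to which I would ultimately appeal for the remaining details.
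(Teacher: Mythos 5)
The paper does not give its own proof of this lemma: it is imported verbatim from \cite[Remark~2.13, Corollary~3.28]{HL}, and the Remark immediately following merely translates notation between the present paper and the reference (identifying $\cA_\omega$ with $\mathrm{D}^b_{S^-/\mathbb{G}_m}(\cX/\mathbb{G}_m)_{\omega}$ and invoking \cite[Amplifications~3.18 and~3.27]{HL}). Your blind proposal is therefore a reconstruction of what lies behind the citation, not an alternative to a proof the paper actually writes down.

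As such a reconstruction, your sketch follows the right route: the weight decomposition of $\dcat(Z/\mathbb{G}_m)$ into copies of $\dcat(Z)$, full faithfulness via the Koszul resolution afforded by $(R)$ and a weight count, the identification of the weights of $i_Z^*\Phi_\omega(F)$ with $F'$ tensored by $\Lambda^\bullet\mathcal{N}^\vee_{S^-|\cX}|_Z$ to land inside $\cA_\omega$, and a d\'evissage along powers of the ideal of $S^-$ for essential surjectivity. This is indeed the strategy of \cite{HL}. Two places deserve more care than you give them. First, you leave the direction of the weight window noncommittal (``an interval of length $\eta$ adjacent to $\omega$''), but the definition of $\cA_\omega$ requires $[\omega, \omega+\eta]$ specifically; pinning this down forces you to track the sign conventions for the conormal weight, and in fact there is a sign subtlety already visible in the paper's statement of condition~$(L)$ versus its verification in Proposition~\ref{prop_window_width}, so this is not idle pedantry. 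Second, the step ``pushing down along $\pi_-$ with the projection formula'' is slippery: the factor $\Lambda^\bullet\mathcal{N}^\vee_{S^-|\cX}$ lives on $S^-$ and is not in general pulled back from $Z$, so the projection formula does not apply directly; the weight argument still works but needs a filtration or a restriction-to-$Z$ device rather than a one-line projection formula. You acknowledge deferring the delicate essential-surjectivity argument to \cite{HL}, which is appropriate, and given that the paper itself only cites the result, the sketch is a fair account of the ideas behind the reference.
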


\begin{remark} For comparison with the original paper \cite{HL}, note that there the group acts on a scheme denoted $X$, and $\cX$ is the quotient stack, while for us $\cX$ is the scheme. We denote the fixed locus  by $Z$ as in~\cite{HL}, but we denote the semistable loci by~$\cX^{\pm}$, in~contrast with $X^{ss}$ in~\cite{HL}. Since we consider a torus action, for a one-parameter subgroup~$\lambda$ the `blade' $Y_{\lambda, Z}$ coincides with the stratum  $S_{\lambda, Z}$ by \cite[Remark 2.1]{HL}. We~denote the latter by $S^{\pm}$, depending on whether the one-parameter subgroup $\lambda$ is $\mathbb{G}_m$ itself, or is given by inversion.

The subcategories denoted here by $\mathcal{G}_{\omega}$ and $\cC_{[\omega, \omega + \eta_{\pm})}$ both appear as $\mathcal{G}_{\omega}$ in \cite[Definition~2.8]{HL} and \cite[Lemma~2.9]{HL} respectively. The subcategory which we denote by $\cA_{\omega}$ is $\mathrm{D}^b_{S^-/\mathbb{G}_m}(\cX/\mathbb{G}_m)_{\omega}$ from~\cite[Amplification~3.27]{HL}. \mbox{Indeed,} if condition~(R) is satisfied then \cite[Lemma 2.9]{HL} holds, giving the above definition of $\cA_\omega$. By \cite[Amplifications 3.18 and 3.27]{HL} this subcategory $\mathrm{D}^b_{S^-/\mathbb{G}_m}(\cX/\mathbb{G}_m)_{\omega}$ is equivalent to $\mathrm{D}^b(Z/\mathbb{G}_m)_{\omega}$ which appears in \cite[Amplification~2.11]{HL}.
\end{remark}

\subsection{Derived category of the root stack $\stack[\rootn]$}

We now apply the above theory to the GIT problem~$\cX_\rootn$ from Definition~\ref{def.quotforroot}. We begin by calculating the window widths~$\eta_{\pm}$.

\begin{proposition}\label{prop_window_width}
	 For $\eta_\pm$ the $\mathbb{G}_m$-weight of $\det \mathcal{N}_{S^\pm | \mathcal X}^{\mp 1} |_Z$ we have
\begin{equation*}
\eta_+ = \rootn,\qquad
\eta_- = 1.
\end{equation*}
\end{proposition}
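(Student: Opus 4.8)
The plan is to compute the conormal line bundles $\mathcal{N}^\vee_{S^\pm|\cX_\rootn}$ from the affine-local presentation of $\cX_\rootn$, restrict them to $Z\cong D$, and extract the $\mathbb{G}_m$-weights. I will use the identifications $S^+=\{y=0\}$, $S^-=\{z=0\}$ and $Z=\{y=z=0\}\cong D$ of Proposition~\ref{prop_quotients}, together with the local model $\cX_\rootn|_{U_i}=\{y_iz_i^\rootn=f_i\}\subset\cT_\rootn|_{U_i}$, in which each $f_i$ is a non-zero-divisor on $U_i$ since $D$ is an effective Cartier divisor.

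First I would check that the inclusions $S^\pm\hookrightarrow\cX_\rootn$ are regular embeddings of codimension~$1$, cut out locally by $y_i$ and by $z_i$ respectively. Indeed, in the coordinate ring $\cO(U_i)[y_i,z_i]/(y_iz_i^\rootn-f_i)$ one has $f_i=y_iz_i^\rootn$, so the ideal of $S^+|_{U_i}$ is $(y_i,f_i)=(y_i)$ and that of $S^-|_{U_i}$ is $(z_i,f_i)=(z_i)$ (this uses $\rootn\geq1$); and $y_i$, $z_i$ are non-zero-divisors in this ring, which follows from $f_i$ being a non-zero-divisor by a short reduction of a hypothetical relation $y_ih\in(y_iz_i^\rootn-f_i)$ modulo $y_i$, and symmetrically for $z_i$. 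In particular hypothesis~$(R)$ of Proposition~\ref{prop_HL_equiv} holds, so that $\det\mathcal{N}^{\mp1}_{S^\pm|\cX_\rootn}$ is defined and, being of rank~$1$, equals the conormal bundle $\mathcal{N}^\vee_{S^+|\cX_\rootn}$ on the $(+)$ side and the normal bundle $\mathcal{N}_{S^-|\cX_\rootn}$ on the $(-)$ side.

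Next I would identify these as $\mathbb{G}_m$-equivariant sheaves on $Z$. The conormal bundle $\mathcal{N}^\vee_{S^+|\cX_\rootn}=(y_i)/(y_i)^2$ is freely generated by the class of the fibre coordinate $y$; its underlying line bundle is pulled back from $D$ (the $y_i$ are the trivialisations of the fibre coordinate of the $\cO_X(D)$-summand of $\cT_\rootn$, so their classes transform by the transition functions of that bundle), and hence the $\mathbb{G}_m$-weight of $\det\mathcal{N}^{-1}_{S^+|\cX_\rootn}|_Z$ is simply the weight by which $\mathbb{G}_m$ acts on that generating section. Since $\mathbb{G}_m$ acts on the class of $y$ in a conormal bundle with weight opposite to the weight $-\rootn$ by which it scales the fibre coordinate $y$ in Definition~\ref{def.quotforroot}, this gives $\eta_+=\rootn$. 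Symmetrically, $\mathcal{N}^\vee_{S^-|\cX_\rootn}=(z_i)/(z_i)^2$ is freely generated by $z$; here the $\cO_X$-summand of $\cT_\rootn$ is the trivial bundle, so $z$ is a global function, the underlying line bundle of $\mathcal{N}^\vee_{S^-|\cX_\rootn}$ is trivial, and after restriction to $Z$ only a $\mathbb{G}_m$-twist of weight $-1$ survives; dualising (on the $(-)$ side one uses $\det\mathcal{N}^{+1}_{S^-|\cX_\rootn}|_Z=\mathcal{N}_{S^-|\cX_\rootn}|_Z$) yields $\eta_-=1$.

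The only place demanding real care is the $\mathbb{G}_m$-weight bookkeeping: one must keep track of the sign flip between the weight of a fibre coordinate on a total space and the weight of the corresponding class in a conormal bundle, and of the roles of the exponents $\mp1$ in the definition of $\eta_\pm$, which are exactly what renders both window widths positive as demanded by hypothesis~$(L)$. As a cross-check one can avoid working inside the possibly-singular $\cX_\rootn$ altogether, by using the conormal exact sequence for $S^\pm\subset\cX_\rootn\subset\cT_\rootn$ together with the observation that $\cX_\rootn=\{yz^\rootn=s\}$ is a $\mathbb{G}_m$-invariant Cartier divisor in the vector bundle $\cT_\rootn$ with normal bundle $p^*\cO_X(D)|_{\cX_\rootn}$ of $\mathbb{G}_m$-weight~$0$; this recovers $\eta_+=\rootn$ and $\eta_-=1$ directly from the coordinate weights $(-\rootn,1)$.
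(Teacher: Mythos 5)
Your proposal is correct and takes essentially the same approach as the paper's proof: identify the local defining equations $y_i$ and $z_i$ of $S^\pm$, observe that they span the conormal bundles $\mathcal{N}^\vee_{S^\pm|\cX_\rootn}$, and read off the $\mathbb{G}_m$-weights $(\shortminus\rootn,1)$ from Definition~\ref{def.quotforroot}. You additionally spell out the non-zero-divisor check for regularity of the embeddings and the sign flip between the fibre-coordinate weight and the weight on $I/I^2$; the paper leaves both implicit, but the core computation is identical.
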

\begin{proof} 
	Over $U_i$, $S^+ = \{y_i =0\}$ and $S^-= \{z_i =0\}$. Hence, locally, $I_{S^+}/I_{S^+}^2$ is spanned by $y_i$, while $I_{S^-}/I_{S^-}^2$ is spanned by $z_i$. The statement follows as the $\mathbb{G}_m$ action is given by $\lambda \cdot y_i = \lambda^{-\rootn}y_i$ and $\lambda \cdot z_i = \lambda z_i$. 
\end{proof}

Base changing the root stack construction to the divisor $D$ itself, we obtain 
\begin{equation}\label{eqn.basechange}
\begin{aligned}
\begin{tikzpicture}[scale=2]
\node (0) at (-1.5,0) {$D$};
\node (3) at (-1.5,1.5) {$\sqrt[\rootn]{\cO_D(D)}$};
\node (1) at (0,0)  {$X$};
\node (2) at (0,1.5) {$\sqrt[\rootn]{X/D}$};
\draw[->] (0) -- node[below]{$i_D$} (1);
\draw[->] (2) -- node[right]{$p$} (1);
\draw[->] (3) -- node[left]{$q$} (0);
\draw[->] (3) -- node[above]{$i$} (2);
\end{tikzpicture}
\end{aligned}
\end{equation}
where $\sqrt[\rootn]{\cO_D(D)}$ is the root stack of the given line bundle on $D$, see Definition \ref{def_root_of_inv_sh}, compare~\cite[Section~5]{IshUed}.
 Indeed, consider $\textrm{Tot}(\cO_D(D) \oplus \cO_D)$ with local fiber coordinates $(z_i,y_i)$. Then the pullback of $\cX_\rootn^+ $  to $D$ is $\cY_\rootn\subset \textrm{Tot}(\cO_D(D) \oplus \cO_D)$ given by $\{y_iz_i^\rootn =0, y_i \neq 0\}$. As $y_i$ is non-zero, $\cY_\rootn$ is isomorphic to the total space of the \mbox{$\mathbb{G}_m$-bundle} associated to $\cO_D(D)$. The $\mathbb{G}_m$ action on $\cY_\rootn$ is fiberwise with weight $-\rootn$. The isomorphism with $\sqrt[\rootn]{\cO_D(D)}$ then follows from Proposition \ref{prop_glob_quot_for_line_bundle}.

\smallskip

We now obtain decompositions for the root stack.

\begin{theorem}\label{thm_SOD_for_r_stack}
	The category $\dcat{\stackbrack(\stack[\rootn]\stackbrack)}$ admits semiorthogonal decompositions
	\begin{equation*}
	\begin{split}
	& \sodbrack\langle p^*\dcat(X)\sodcomma i_*q^*\dcat(D)\sodcomma i_*q^*\dcat(D)\otimes \cO\langle 1\rangle\sodcomma\ldots\sodcomma i_*q^*\dcat(D)\otimes \cO\langle \rootn-2\rangle\sodbrack\rangle \\
	 & = \sodbrack\langle i_*q^*\dcat(D)\sodcomma  p^*\dcat(X)\otimes \cO\langle 1 \rangle\sodcomma i_*q^*\dcat(D)\otimes \cO\langle 1\rangle\sodcomma\ldots\sodcomma i_*q^*\dcat(D)\otimes \cO\langle  \rootn-2\rangle \sodbrack\rangle \\
	& = \ldots \\
	& = \sodbrack\langle i_*q^*\dcat(D)\sodcomma   i_*q^*\dcat(D)\otimes \cO\langle 1\rangle\sodcomma\ldots\sodcomma i_*q^*\dcat(D)\otimes \cO\langle  \rootn-2\rangle\sodcomma p^*\dcat(X)\otimes \cO\langle \rootn-1 \rangle\sodbrack\rangle.
	\end{split}
	\end{equation*}
\end{theorem}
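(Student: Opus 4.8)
The plan is to apply the window machinery of Section~\ref{section.derivedcatGIT} to the variation of GIT $\cX_\rootn$ of Definition~\ref{def.quotforroot}, and to transport the resulting semiorthogonal decompositions across the equivalence of Proposition~\ref{prop_HL_equiv}. First I would check the hypotheses $(A)$, $(R)$, $(L)$ for $\cX_\rootn$. Hypothesis $(A)$ is immediate from Proposition~\ref{prop_quotients}: the maps $\pi_\pm\colon S^\pm\to Z$ are the bundle projections of the total spaces of $\cO_D$ and $\cO_D(D)$ onto $D\simeq Z$, which are Zariski-locally trivial affine-space bundles. For $(R)$, over the cover $\{U_i\}$ one has $\cX_\rootn|_{U_i}=\{y_iz_i^\rootn=f_i\}\subset\cT_\rootn|_{U_i}$, cut out by $y_iz_i^\rootn-f_i$, which is a nonzerodivisor since $f_i$ is (as $D$ is Cartier); a short computation then shows that $y_i$ and $z_i$ remain nonzerodivisors modulo this element, so $S^+=\{y=0\}$ and $S^-=\{z=0\}$ are regular embeddings of codimension one. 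Hypothesis $(L)$ holds because the weights $\eta_+=\rootn$ and $\eta_-=1$ of Proposition~\ref{prop_window_width} are positive.

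Granting the hypotheses, Proposition~\ref{prop_HL_equiv} on the $(+)$ side with $\omega=0$ and window width $\eta_+=\rootn$ gives an equivalence $i_+^*\colon\cC_{[0,\rootn)}\xrightarrow{\,\sim\,}\dcat{\stackbrack(\stack[\rootn]\stackbrack)}$, while Proposition~\ref{prop_HL_SOD} on the $(-)$ side, with $\eta=\eta_-=1$ and $l=\rootn-1$, exhibits the very same category as the chain of semiorthogonal decompositions
\[
\cC_{[0,\rootn)}=\langle\cC_{[0,1)},\cA_0,\ldots,\cA_{\rootn-2}\rangle=\langle\cA_0,\cC_{[1,2)},\cA_1,\ldots,\cA_{\rootn-2}\rangle=\cdots=\langle\cA_0,\ldots,\cA_{\rootn-2},\cC_{[\rootn-1,\rootn)}\rangle.
\]
Applying $i_+^*$ turns these into the asserted decompositions, once the components are matched up. Tensoring by the weight-$w$ line bundle is an autoequivalence of $\dcat(\cX_\rootn/\mathbb{G}_m)$ shifting $\mathbb{G}_m$-weights by $w$ and commuting with $i_+^*$, under which $i_+^*\cO_{\cX_\rootn/\mathbb{G}_m}\langle w\rangle$ is the bundle $\cO\langle w\rangle$ on the root stack; hence $\cC_{[w,w+1)}=\cC_{[0,1)}\otimes\cO\langle w\rangle$ and $\cA_w=\cA_0\otimes\cO\langle w\rangle$, and it suffices to compute $i_+^*$ on $\cC_{[0,1)}$ and on $\cA_0$.

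To handle $\cC_{[0,1)}$, I would introduce the projection $\rho\colon\cX_\rootn\to X$ of $\cT_\rootn$, which is $\mathbb{G}_m$-equivariant for the trivial action on $X$. Then $\mathcal{H}^\bullet(i_Z^*\rho^*E)$ is concentrated in weight $0$, so $\rho^*E\in\cC_{[0,1)}$; moreover $i_-^*\rho^*E\simeq E$, since $[\cX_\rootn^-/\mathbb{G}_m]\to X$ is the descent of $\rho$ by Proposition~\ref{prop_quotients}, and $i_+^*\rho^*E=p^*E$, since $[\cX_\rootn^+/\mathbb{G}_m]\to X$ is likewise the descent of $\rho$, which by construction is $p$. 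As $i_-^*$ restricts to the equivalence $\cC_{[0,1)}\simeq\dcat(X)$ of Proposition~\ref{prop_HL_equiv}, the functor $\rho^*$ is a quasi-inverse, so $i_+^*$ carries the copy of $\dcat(X)$ onto $p^*\dcat(X)$. To handle $\cA_0$, recall from Lemma~\ref{lem_D(Z)_A_omega} that the embedding $\dcat(D)=\dcat(Z)\hookrightarrow\cA_0$ is $\Phi_0(F)=j_*\pi_-^*\tau^*F$. Since $S^-=\{z=0\}$ is closed in $\cX_\rootn$ and $\cX_\rootn^+$ is open, flat base change gives $i_+^*j_*\simeq i_*(i'_+)^*$, where $i$ and $i'_+$ are respectively the closed and open immersions of the fibre product $[(S^-\cap\cX_\rootn^+)/\mathbb{G}_m]$ into the root stack and into $[S^-/\mathbb{G}_m]$. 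Now $S^-\cap\cX_\rootn^+=\{z=0,\ y\neq0\}$ lies over $D$ (where $s$ vanishes) and is the total space of the $\mathbb{G}_m$-bundle of $\cO_D(D)$ with weight $-\rootn$; so by Proposition~\ref{prop_glob_quot_for_line_bundle} its quotient stack is $\sqrt[\rootn]{\cO_D(D)}$, the immersion $i$ coincides with the map of~\eqref{eqn.basechange}, and $\tau\pi_-$ restricted to this stack is $q$. Hence $i_+^*\Phi_0(F)=i_*(i'_+)^*\pi_-^*\tau^*F=i_*q^*F$, so $i_+^*$ carries the copy of $\dcat(D)$ onto $i_*q^*\dcat(D)$. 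Combining these identifications with the weight shifts yields the three displayed decompositions (for $\rootn=1$ there is a single component and the statement is trivial).

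The step I expect to be the main obstacle is the bookkeeping of the last paragraph: matching the geometrically defined embeddings $p^*$ and $i_*q^*$ with the abstract window subcategories, which comes down to the base-change identity for $i_+^*j_*$ and the identification of $S^-\cap\cX_\rootn^+$ together with the maps $\pi_-$, $\tau$ and $q$ via Proposition~\ref{prop_glob_quot_for_line_bundle}. No input beyond Sections~\ref{sect.rootstacks} and~\ref{section.derivedcatGIT} should be required.
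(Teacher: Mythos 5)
Your proposal follows the paper's proof almost step for step: verify $(A)$, $(R)$, $(L)$ for $\cX_\rootn$; apply the $(+)$ window equivalence $i_+^*\colon\cC_{[0,\rootn)}\xrightarrow{\sim}\dcat{\stackbrack(\stack[\rootn]\stackbrack)}$; apply the $(-)$ decompositions of $\cC_{[0,\rootn)}$ from Proposition~\ref{prop_HL_SOD}; identify the image of $\cC_{[\omega,\omega+1)}$ via the section $\rho^*$ of $i_-^*$; identify the image of $\cA_\omega$ by flat base change across the Cartesian square with $S^\circ=S^-\cap\cX_\rootn^+$ and the quotient description $S^\circ/\mathbb{G}_m\simeq\sqrt[\rootn]{\cO_D(D)}$. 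The route and all key lemmas are the same.

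There is one gap worth flagging. You assert that $\rho^*E\in\cC_{[0,1)}$ after computing the $\mathbb{G}_m$-weights of $\mathcal{H}^\bullet(i_Z^*\rho^*E)$, but $\cC_{[0,1)}$ is by definition a subcategory of the \emph{bounded coherent} derived category $\dcat(\cX_\rootn/\mathbb{G}_m)$, and $\rho$ is not flat, so it is not automatic that derived pullback $\rho^*$ carries $\dcat(X)$ into $\dcat(\cX_\rootn/\mathbb{G}_m)$ rather than only into the unbounded or quasi-coherent category. This is a genuine issue precisely because the theorem is stated without any smoothness hypotheses on $X$ or $D$. The paper closes it by factoring $\rho$ as the Cartier-divisor inclusion $i_{\cX}\colon\cX_\rootn/\mathbb{G}_m\hookrightarrow\cT_\rootn/\mathbb{G}_m$ followed by the flat projection $\cT_\rootn/\mathbb{G}_m\to X$, so $\rho$ has finite Tor-dimension and $\rho^*$ preserves boundedness; you should include an argument of this kind. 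The rest of the bookkeeping you were worried about — the base-change identity $i_+^*j_*\simeq j^\circ_*k^*$, the identification of $S^-\cap\cX_\rootn^+$, and matching $\tau\pi_-|_{S^\circ}$ with $q$ via Proposition~\ref{prop_glob_quot_for_line_bundle} — is carried out in the paper exactly as you describe and is correct.
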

\begin{proof} 
 We check that under the equivalences $\cA_\omega\simeq \dcat(Z) \simeq \dcat(D)$, see Lemma~\ref{lem_D(Z)_A_omega} and Proposition~\ref{prop_quotients}, and $i_+^*\colon \mathcal{C}_{[0,\rootn)} \xrightarrow{\sim}\dcat{\stackbrack(\stack[\rootn]\stackbrack)}$, see  Propositions~\ref{prop_HL_equiv} and~\ref{prop_window_width}, the required semiorthogonal decompositions are the decompositions given by Proposition~\ref{prop_HL_SOD} with $\eta=\eta_-=1$ and $l=\rootn-1$, after noting that $\mathcal{C}_{[0, \rootn)} = \mathcal{C}_{[0, \eta +\rootn-1)}$.
 
 We first check that conditions $(A)$, $(R)$ and $(L)$ of Proposition \ref{prop_HL_equiv} are satisfied. By Proposition \ref{prop_quotients}, over $U_i$ we have $S^+=\{y_i = 0\}$, $S^-=\{z_i = 0\}$ and $Z = \{y_i, z_i =0\}$. Furthermore, $S^+ \simeq \textrm{Tot}(\cO_D)$ and $S^- \simeq \textrm{Tot}(\cO_D(D))$ are clearly locally trivial bundles of affine spaces over $Z$, and the embeddings $S^+ \to \cX$ and $S^- \to \cX$ given by~$y_i$ and~$z_i$ respectively are regular. Finally, by Proposition~\ref{prop_window_width}, $\det \mathcal{N}^{\mp 1}_{S^\pm|\cX}|_Z$ has positive weights~$\rootn$ and~1 respectively.
 
 \smallskip
 \emph{Embeddings of $\dcat(D)$.} Recall from Lemma \ref{lem_D(Z)_A_omega} the embedding $\Phi_{\omega}$ of $\dcat(Z)$ into $\dcat(\cX/\mathbb{G}_m)$ with $\mathcal{A}_\omega$ as its essential image. We show that under the isomorphism $Z\simeq D$, $i_+^*\circ \Phi_{\omega}(-)\simeq i_*q^*(-) \otimes \cO_{\stack[\rootn]}\langle \omega \rangle$. 
 Let $S^\circ$ be the open subscheme $S^- - Z$ of~$S^-$. We have a diagram as follows.
 \[
 \begin{tikzpicture}[scale=2]
 \node (0) at (-1.5,0) {$Z/\mathbb{G}_m$};
 \node (1) at (0,0)  {$S^-/\mathbb{G}_m$};
 \node (2) at (1.5,0) {$\mathcal{X}_\rootn/\mathbb{G}_m$};
 \node (0B) at (-1.5,-1) {$Z/\mathbb{G}_m$};
 \node (1B) at (0,-1)  {$S^\circ/\mathbb{G}_m$};
 \node (2B) at (1.5,-1) {$\mathcal{X}^+_\rootn/\mathbb{G}_m$};
 \draw[<-] (0) -- node[above]{$\pi_-$} (1);
 \draw[right hook->] (1) -- node[above]{$j$} (2);
 \draw[<-] (0B) -- node[below]{$\pi^\circ$} (1B);
 \draw[right hook->] (1B) -- node[below]{$j^\circ$} (2B);
 \draw [double equal sign distance] (0) to (0B);
 \draw[right hook->] (1B) -- node[right]{$k$} (1);
 \draw[right hook->] (2B) -- node[right]{$i_+$} (2);
 \end{tikzpicture}
 \]
 By the description of $\mathcal{X}^+_\rootn$ in Proposition~\ref{prop_quotients}, the right-hand square is Cartesian, so by flat base change we have
  \begin{align*} i_+^*\circ \Phi_{\omega}(-) 
  &\simeq i_+^*j_*\pi_-^*(\tau^*(-)\otimes \cO_{Z/\mathbb{G}_m}\langle \omega\rangle) \\
  &\simeq j^\circ_*k^*\pi_-^*(\tau^*(-)\otimes \cO_{Z/\mathbb{G}_m}\langle \omega\rangle) \\
  &\simeq j^\circ_*{\pi^\circ}^*(\tau^*(-)\otimes \cO_{Z/\mathbb{G}_m}\langle \omega\rangle).  
\end{align*}
Tensor-pullback distributivity and the projection formula then give that
  \begin{align*}
i_+^*\circ \Phi_{\omega}(-)  &\simeq j^\circ_*({\pi^\circ}^*\tau^*(-)\otimes \cO_{S^\circ/\mathbb{G}_m}\langle \omega\rangle) \\
  &\simeq j^\circ_*\big({\pi^\circ}^*\tau^*(-)\otimes j^{\circ*}\cO_{\cX^+_\rootn/\mathbb{G}_m}\langle \omega\rangle\big) \\
  & \simeq j^\circ_*{\pi^\circ}^*\tau^*(-)\otimes \cO_{\cX^+_\rootn/\mathbb{G}_m}\langle \omega\rangle \\
 & \simeq j^\circ_* \sigma^*(-) \otimes \cO_{\cX^+_\rootn/\mathbb{G}_m}\langle \omega \rangle
  \end{align*}
 where $\sigma = \tau \,\pi^\circ$ is shown below. \[
 \begin{tikzpicture}[scale=2]
 \node (XB) at (-2.75,-1) {$Z$};
 \node (0B) at (-1.5,-1) {$Z/\mathbb{G}_m$};
 \node (1B) at (0,-1)  {$S^\circ/\mathbb{G}_m$};
 \draw[<-] (XB) -- node[above]{$\tau$} (0B);
 \draw[<-] (0B) -- node[above]{$\pi^\circ$} (1B);
 \draw[<-] (XB) to[bend right=30] node[below]{$\sigma$} (1B);
 \end{tikzpicture}
 \]
 
 Now for us $S^\circ$ is a $\mathbb{G}_m$-bundle over $Z$ with a $\mathbb{G}_m$-action of  weight~$-\rootn$. By Proposition~\ref{prop_quotients}, this is isomorphic to the bundle $\cO_D(D)$ over $D$ with $\mathbb{G}_m$-action of  weight~$-\rootn$ after removing the zero section. In other words we have an isomorphism $S^\circ/\mathbb{G}_m \simeq \sqrt[\rootn]{\cO_D(D)}$, see Proposition \ref{prop_glob_quot_for_line_bundle}. This fits in a commutative diagram as follows.
 \[
 \begin{tikzpicture}[scale=2]
 \node (0) at (-1.25,0) {$D$};
 \node (1) at (0,0)  {$\sqrt[\rootn]{\cO_D(D)}$};
 \node (2) at (1.5,0) {$\sqrt[\rootn]{X/D}$};
 \node (0B) at (-1.25,-1) {$Z$};
 \node (1B) at (0,-1)  {$S^\circ/\mathbb{G}_m$};
 \node (2B) at (1.5,-1) {$\mathcal{X}^+_\rootn/\mathbb{G}_m$};
 \draw[<-] (0) -- node[above]{$q$} (1);
 \draw[right hook->] (1) -- node[above]{$i$} (2);
 \draw[<-] (0B) -- node[below]{$\sigma$} (1B);
 \draw[right hook->] (1B) -- node[below]{$j^\circ$} (2B);
 \draw[->] (0B) to node[above,sloped]{\small $\sim$} (0);
 \draw[->] (1B) -- node[above,sloped]{\small $\sim$} (1);
 \draw[->] (2B) -- node[above,sloped]{\small $\sim$} (2);
 \end{tikzpicture}
 \]
 We thence  get the required description of the embeddings of $\dcat(D)$, namely
 \[ i_+^*\circ \Phi_\omega(-) \simeq i_*q^*(-)\otimes \cO_{\stack[\rootn]}\langle \omega \rangle. \]
 
  \smallskip
 \emph{Embeddings of $\dcat(X)$.} 
First, we find an inverse of the equivalence $i_-^* \colon \mathcal{C}_{[\omega, \omega +1)} \xrightarrow{\sim} \dcat(X)$ of Proposition \ref{prop_HL_equiv}.
Consider a commutative diagram as below, following the argument of \cite[Lemma~5.2(1)]{CIJS}.
 \begin{equation*}
 \begin{tikzpicture} [xscale=1.25,scale=1.6] % 2.25
 \node (A) at (-2.25,0) {$X$};
 \node (Ap) at (-1,0) {$\mathcal{X}^-_\rootn/\mathbb{G}_m$}; 
 \node (C) at (2.25,0) {$\sqrt[\rootn]{X/D}$}; 
 \node (Cp) at (1,0) {$\mathcal{X}^+_\rootn/\mathbb{G}_m$}; 
 \node (B) at (0,-1) {$X$};
 \node (D) at (0,1) {$\mathcal{X}_\rootn/\mathbb{G}_m$}; 
 \draw [double equal sign distance] (A) to (B);
 \draw [->] (D) to node[pos=0.4,right] {$\rho$} (B);
 \draw [->] (C) to node[below right] {$p$} (B);
 \draw [<-] (C) to node[above] {\small $\sim$} (Cp);
 \draw [<-] (A) to node[above] {\small $\sim$} (Ap);
 \draw [->] (Ap) to node[above right] {$\rho_-$} (B);
 \draw [->] (Cp) to node[above left] {$\rho_+$} (B);
 \draw [right hook->] (Ap) to node[above left] {$i_-$} (D);
 \draw [left hook->] (Cp) to node[above right] {$i_+$}  (D);
 \end{tikzpicture}
 \end{equation*}
 Recall that $\cT_\rootn = \textrm{Tot}(\cO_X(D) \oplus \cO_X)$. The morphism $\rho$ decomposes into an inclusion of a Cartier divisor $i_{\cX}\colon \cX_\rootn/\mathbb{G}_m \to \cT_\rootn/\mathbb{G}_m$ followed by a flat projection $\cT_\rootn/\mathbb{G}_m \to X$. It follows that $\rho^*$ is a functor $\dcat(X) \to \dcat(\cX_\rootn/\mathbb{G}_m)$. Indeed, given an object $E$ in $\dcat(\cT_\rootn/\mathbb{G}_m)$ two of the three terms of the functorial exact triangle \eqref{eqtn_cotw_for_divisor} are objects of $\dcat(\cT_\rootn/\mathbb{G}_m)$, hence so is the third one $i_{\cX*} i_{\cX}^* E$. As $i_{\cX*}$ has no kernel, we conclude  that $i_{\cX}^*E \in \dcat(\cX_\rootn/\mathbb{G}_m)$. Further, $\rho_-$ 
 is an isomorphism, hence $i_-^*\rho^* \simeq \rho_-^* \simeq \textrm{Id}_{\dcat(X)}$ by the left-hand side of the diagram above. In other words, $\rho^*$~is an inverse of $i_-^* \colon  \mathcal{C}_{[0,1)} \xrightarrow{\sim} \dcat(X)$. It follows that $\rho^*(-)\otimes \cO_{\cX_\rootn/\mathbb{G}_m}\langle \omega \rangle$ is an inverse of $i_-^* \colon \mathcal{C}_{[\omega, \omega +1)} \xrightarrow{\sim} \dcat(X)$.
 
Using Proposition \ref{prop_HL_equiv} again, we thence have a functor $\dcat(X) \to \dcat{\stackbrack(\stack[\rootn]\stackbrack)}$ with essential image $i_+^* \mathcal{C}_{[\omega, \omega +1)}$ given by
 \[ i_+^*(\rho^*(-) \otimes \cO_{\cX_\rootn/\mathbb{G}_m}\langle \omega \rangle) \simeq i_+^*\rho^*(-) \otimes \cO_{\stack[\rootn]}\langle \omega \rangle \simeq p^*(-) \otimes \cO_{\stack[\rootn]}\langle \omega \rangle. \]
Here the last isomorphism uses the right-hand side of the diagram above, and we find the required description of the embeddings of $\dcat(X)$.
 \end{proof}

\section{Periodic semiorthogonal decompositions}

The following is a preparation for the proof of our main theorem.

\begin{proposition}\label{prop_intertwinement}
The autoequivalence \[- \otimes \cO_{\stack[\rootn]} \langle \rootn \rangle \] of $\dcat{\stackbrack(\sqrt[\rootn]{X/D}\stackbrack)} $ preserves the decompositions of Theorem \ref{thm_SOD_for_r_stack}. Furthermore, it intertwines  with autoequivalences
\[- \otimes \cO_X(-D) \qquad \text{and} \qquad - \otimes\, \cO_D(-D)\] via the embeddings $p^*$ from $\dcat{(X)}$, and $i_* q^*$ from $\dcat{(D)}$, respectively.
\end{proposition}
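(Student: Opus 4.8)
The plan is to verify the two claims separately, both by reducing to statements about the $\mathbb{G}_m$-equivariant geometry of $\cX_\rootn$ established in the previous section.

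For the first claim, that $-\otimes\cO_{\stack[\rootn]}\langle\rootn\rangle$ preserves the decompositions of Theorem~\ref{thm_SOD_for_r_stack}, I would work on the GIT side via the equivalence $i_+^*\colon\mathcal{C}_{[0,\rootn)}\xrightarrow{\sim}\dcat{\stackbrack(\stack[\rootn]\stackbrack)}$. By Remark~\ref{rem_O(-r)}, $\cO_{\stack[\rootn]}\langle-\rootn\rangle$ is the pullback $p^*\cO_X(D)$, so twisting by $\cO\langle\rootn\rangle$ corresponds on $\dcat(\cX_\rootn/\mathbb{G}_m)$ to twisting by $\cO\langle\rootn\rangle$, which shifts all $\mathbb{G}_m$-weights by $\rootn$ and hence sends $\mathcal{C}_{[\omega,\omega+\eta)}$ isomorphically to $\mathcal{C}_{[\omega+\rootn,\,\omega+\rootn+\eta)}$; similarly it sends $\cA_\omega$ to $\cA_{\omega+\rootn}$, since the defining weight conditions on $i_Z^*E$ and the support condition on $S^-$ are simply translated. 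The decompositions of Proposition~\ref{prop_HL_SOD} with $\eta=1$, $l=\rootn-1$ are expressed in terms of the $\mathcal{C}_{[k,\,k+1)}$ and $\cA_w$, and the twist by $\cO\langle\rootn\rangle$ on $\dcat(\cX_\rootn/\mathbb{G}_m)$ permutes the pieces appearing in Theorem~\ref{thm_SOD_for_r_stack} (using $\mathcal{C}_{[0,\rootn)}=\mathcal{C}_{[0,\eta+\rootn-1)}$ and the identification $\mathcal{C}_{[0,1)}\simeq\mathcal{C}_{[\rootn,\rootn+1)}$ under $i_+^*$ being $p^*$-type, $\cA_w\simeq i_*q^*\dcat(D)\otimes\cO\langle w\rangle$ being $i_*q^*$-type). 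Since twisting is an autoequivalence, it sends semiorthogonal decompositions to semiorthogonal decompositions, so each of the listed decompositions is carried to another one on the list; this is the preservation statement. The mild bookkeeping point is to check the indices line up — specifically that $\cO\langle\rootn\rangle$ carries the first decomposition to itself, which follows because $p^*\dcat(X)\otimes\cO\langle\rootn-1\rangle$ twisted by $\cO\langle\rootn\rangle$ lands in the admissible window after re-reading via $i_+^*$.

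For the intertwining statement, I would compute directly with the embedding functors. For $\dcat(X)$: the embedding is $p^*$, so I must show $(p^*(-))\otimes\cO_{\stack[\rootn]}\langle\rootn\rangle\simeq p^*((-)\otimes\cO_X(-D))$. By Remark~\ref{rem_O(-r)} we have $\cO_{\stack[\rootn]}\langle-\rootn\rangle\simeq p^*\cO_X(D)$, hence $\cO_{\stack[\rootn]}\langle\rootn\rangle\simeq p^*\cO_X(-D)$, and the claim is then just the projection formula / tensor–pullback compatibility for $p$. For $\dcat(D)$: the embedding is $i_*q^*$, so I must show $(i_*q^*(-))\otimes\cO_{\stack[\rootn]}\langle\rootn\rangle\simeq i_*q^*((-)\otimes\cO_D(-D))$. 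By the projection formula for the closed immersion $i$ this equals $i_*\!\left(q^*(-)\otimes i^*\cO_{\stack[\rootn]}\langle\rootn\rangle\right)$, so it suffices to identify $i^*\cO_{\stack[\rootn]}\langle\rootn\rangle$ with $q^*\cO_D(-D)$. Now $i^*\cO_{\stack[\rootn]}\langle1\rangle$ is the weight-$1$ line bundle $\cO_{\sqrt[\rootn]{\cO_D(D)}}\langle1\rangle$ on the root stack of $\cO_D(D)$, and by Remark~\ref{rem_O_r_for_inv_sh} its $(-\rootn)$-th power $\cO_{\sqrt[\rootn]{\cO_D(D)}}\langle-\rootn\rangle$ is $q^*\cO_D(D)$; taking the $\rootn$-th power of the dual gives $i^*\cO_{\stack[\rootn]}\langle\rootn\rangle\simeq q^*\cO_D(-D)$, as needed. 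The compatibility of $i^*\cO_{\stack[\rootn]}\langle1\rangle$ with $\cO_{\sqrt[\rootn]{\cO_D(D)}}\langle1\rangle$ follows from the construction of the base-change square~\eqref{eqn.basechange}, where both are induced by $\mathbb{G}_m$-weight~$1$.

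I expect the only real obstacle to be notational: making precise the identification $i^*\cO_{\stack[\rootn]}\langle1\rangle\simeq\cO_{\sqrt[\rootn]{\cO_D(D)}}\langle1\rangle$, i.e.\ that the weight-$1$ bundles match up under the Cartesian square~\eqref{eqn.basechange}. This is immediate from the GIT description: $\cX_\rootn^+$ pulls back along $i_D$ to $\cY_\rootn$ (as in the discussion after Proposition~\ref{prop_window_width}), the $\mathbb{G}_m$-action is compatible, and a weight-$w$ representation pulls back to a weight-$w$ representation, so the line bundles $\cO\langle w\rangle$ are compatible with $i$. Once this is in hand, both halves of the proposition are short applications of the projection formula together with Remarks~\ref{rem_O(-r)} and~\ref{rem_O_r_for_inv_sh}.
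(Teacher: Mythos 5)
Your intertwinement argument is essentially the paper's: both observe $\cO_{\stack[\rootn]}\langle\rootn\rangle\simeq p^*\cO_X(-D)$ via Remark~\ref{rem_O(-r)}, then apply tensor--pullback compatibility for $p^*$ and the projection formula for $i_*$; your minor variation in the $i_*q^*$ case (identifying $i^*\cO\langle1\rangle$ with $\cO_{\sqrt[\rootn]{\cO_D(D)}}\langle1\rangle$ and invoking Remark~\ref{rem_O_r_for_inv_sh}) is equivalent to the paper's route through commutativity of the base-change square~\eqref{eqn.basechange}. The lengthy GIT-side bookkeeping in your first paragraph is unnecessary and somewhat circular, since the identification $i_+^*\mathcal{C}_{[\omega,\omega+1)}=i_+^*\mathcal{C}_{[\omega+\rootn,\omega+\rootn+1)}$ is itself the intertwinement fact you later prove: once the two intertwinements are in hand, preservation of the decompositions follows immediately because $-\otimes\cO_X(-D)$ and $-\otimes\cO_D(-D)$ are autoequivalences of $\dcat(X)$ and $\dcat(D)$, so each component of Theorem~\ref{thm_SOD_for_r_stack} is sent to itself.
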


\begin{proof}
	 Note that $\cO_{\stack[\rootn]} \langle \rootn \rangle \simeq p^* \cO_X(-D)$ by Remark \ref{rem_O(-r)}. Using tensor-pullback distributivity we have
\begin{align*}
\big(- \otimes \cO_{\stack[\rootn]} \langle \rootn \rangle\big) \,p^* & \simeq (- \otimes p^* \cO_X(-D)) \,p^* \\
& \simeq p^* (- \otimes \cO_X(-D)). 
\end{align*}
Furthermore using the projection formula and commutativity of square~\eqref{eqn.basechange} gives
\begin{align*}
\big(- \otimes \cO_{\stack[\rootn]} \langle \rootn \rangle\big) \,i_* q^* & \simeq i_* \big(- \otimes \,i^* \cO_{\stack[\rootn]} \langle \rootn \rangle\big) \,q^* \\
 & \simeq i_* (- \otimes i^* p^* \cO_X(-D)) \,q^* \\
 & \simeq i_* (- \otimes q^* i_D^*  \cO_X(-D)) \,q^* \\
& \simeq i_* q^*(- \otimes i_D^*  \cO_X(-D)) \\
& \simeq i_* q^*(- \otimes \cO_D(-D)). 
\qedhere \end{align*}
\end{proof}

Recall \cite{Bon} that given a semiorthogonal decomposition 
\begin{equation}\label{eqtn_SOD}
\cC = \sodbrack \langle \cA, \cB \sodbrack \rangle
\end{equation} 
of a triangulated category $\cC$ with $\cA$ and $\cB$ admissible the left and right dual semiorthogonal decompositions, $\sodbrack \langle L_{\cA}\cB, \cA \sodbrack \rangle$ and $\sodbrack \langle \cB, R_{\cB}\cA \sodbrack \rangle$ respectively, exist  for the left mutation $L_{\cA}\cB$ of $\cB$ over $\cA$ and the right mutation $R_{\cB}\cA$ of $\cA$ over $\cB$. We say that the semiorthogonal decomposition \eqref{eqtn_SOD} is \emph{strongly admissible} if the $\nth[N]$ left and right dual decompositions exist for any $N$. Given such a decomposition the Artin braid group on two strands $B_2 \cong \mathbb{Z}$ acts on the set of the decomposition \eqref{eqtn_SOD} and all its left and right duals.
\begin{definition}\label{def_periodic_SOD}\cite[Section 4.2]{DKS}
	A semiorthogonal decomposition \eqref{eqtn_SOD} of a triangulated category $\cC$ is \emph{$N$-periodic} if the $\nth[N]$ right dual decomposition is again \eqref{eqtn_SOD}, i.e.\ if the decomposition is strongly admissible and the action of $\mathbb{Z}$  factors through $\mathbb{Z}_N$.
\end{definition}

Our main theorem now follows rapidly.

\begin{theorem}\label{thm_2r_period}
	Take a full subcategory
	\[\mathcal{D} = \sodbrack\langle i_*q^*\dcat(D)\sodcomma i_*q^*\dcat(D) \otimes \cO\langle 1\rangle\sodcomma \ldots\sodcomma i_*q^*\dcat(D) \otimes \cO\langle \rootn-2 \rangle \sodbrack\rangle \subset \dcat{\stackbrack(\stack[\rootn]\stackbrack)}.\] 
	Then the following semiorthogonal decomposition is $2\rootn$-periodic.
	\begin{equation}\label{eqtn_2r_per_SOD}
	\dcat{\stackbrack(\stack[\rootn]\stackbrack)} = \langle p^*\dcat(X), \mathcal{D} \rangle
	\end{equation}
\end{theorem}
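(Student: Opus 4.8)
The plan is to prove $2\rootn$-periodicity by combining the chain of semiorthogonal decompositions from Theorem~\ref{thm_SOD_for_r_stack} with the twist autoequivalence analysed in Proposition~\ref{prop_intertwinement}. First I would observe that the decomposition \eqref{eqtn_2r_per_SOD} has only two components, $p^*\dcat(X)$ and $\mathcal{D}$, and that the chain of equivalent decompositions in Theorem~\ref{thm_SOD_for_r_stack} exhibits exactly $\rootn$ successive right mutations: starting from $\langle p^*\dcat(X), i_*q^*\dcat(D), i_*q^*\dcat(D)\otimes\cO\langle 1\rangle, \ldots \rangle$, each step moves $p^*\dcat(X)$ one place to the right past a single copy $i_*q^*\dcat(D)\otimes\cO\langle w\rangle$, until after $\rootn$ such steps $p^*\dcat(X)\otimes\cO\langle \rootn-1\rangle$ sits at the far right. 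Reading this as mutations of the two-term decomposition $\langle p^*\dcat(X),\mathcal{D}\rangle$, after $\rootn$ right mutations one arrives at a decomposition of the form $\langle \mathcal{D}', p^*\dcat(X)\otimes\cO\langle\rootn-1\rangle\rangle$ for a suitable $\mathcal{D}'$; the precise identification of the intermediate components requires unwinding which copies of $i_*q^*\dcat(D)\otimes\cO\langle w\rangle$ have been passed, but each individual step is governed by Proposition~\ref{prop_HL_SOD}, so admissibility and the existence of all duals is automatic.

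Next I would invoke Proposition~\ref{prop_intertwinement}. The autoequivalence $-\otimes\cO_{\stack[\rootn]}\langle\rootn\rangle$ preserves the decompositions of Theorem~\ref{thm_SOD_for_r_stack}; in particular it carries $p^*\dcat(X)$ to $p^*\dcat(X)\otimes\cO\langle\rootn\rangle$ and $\mathcal{D}$ to $\mathcal{D}\otimes\cO\langle\rootn\rangle$. The key point is that this twist, which is $\rootn$-periodic on the level of the grading $\cO\langle\,\cdot\,\rangle$ modulo $\rootn$ but acts with infinite order on the category, identifies the decomposition reached after $\rootn$ right mutations — namely $\langle \mathcal{D}',\, p^*\dcat(X)\otimes\cO\langle\rootn-1\rangle\rangle$ — with a twist of the original two-term decomposition but with the roles of $p^*\dcat(X)$ and $\mathcal{D}$ swapped. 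Concretely: after $\rootn$ right mutations we obtain a decomposition whose components, up to the twist $-\otimes\cO\langle\rootn-1\rangle$, are $\mathcal{D}$ and $p^*\dcat(X)$ in the opposite order. Thus the $\rootn$-th right dual of \eqref{eqtn_2r_per_SOD} is $\langle \mathcal{D}, p^*\dcat(X)\rangle$ twisted by an autoequivalence; doing this a second time — another $\rootn$ right mutations, moving $p^*\dcat(X)$ back to the front — returns, again up to an overall twist by $\cO\langle\rootn-1\rangle$ applied twice, i.e.\ by $\cO\langle 2\rootn-2\rangle$, to $\langle p^*\dcat(X),\mathcal{D}\rangle$. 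Here one uses $\cO\langle 2\rootn-2\rangle\simeq\cO\langle-2\rangle$ composed with the order-considerations, and finally matches the leftover twist against Proposition~\ref{prop_intertwinement} to conclude that the $2\rootn$-th right dual is \emph{equal} to the original decomposition, not merely a twist of it.

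The main obstacle I expect is bookkeeping the twists correctly so that the final composite autoequivalence is genuinely the identity on the set of decompositions rather than just a twist by $\cO\langle 2\rootn\rangle$ or similar. The subtlety is that $-\otimes\cO\langle\rootn\rangle$ is \emph{not} the identity functor, so one must be careful: what must hold is that the $2\rootn$ successive right mutations compose to an autoequivalence that \emph{fixes the pair} $(p^*\dcat(X),\mathcal{D})$ as subcategories, and Proposition~\ref{prop_intertwinement} is exactly the tool that lets one absorb the accumulated $\cO\langle\,\cdot\,\rangle$-twists back into a symmetry of the configuration. I would organise the argument so that one full "period" of $\rootn$ mutations is shown to equal (twist by $\cO\langle\rootn-1\rangle$) $\circ$ (swap of the two components), and then squaring this gives (twist by $\cO\langle 2\rootn-2\rangle$) with the components back in original order; since twisting by $\cO\langle\rootn\rangle$ fixes the decomposition (Proposition~\ref{prop_intertwinement}), the residual twist by $\cO\langle\rootn-2\rangle$ versus $\cO\langle-2\rangle$ must be reconciled, and a clean way is to note the mutations are defined only up to the $\cO\langle\,\cdot\,\rangle$-action anyway, so periodicity in the sense of Definition~\ref{def_periodic_SOD} — the $2\rootn$-th right dual \emph{is} the original decomposition — follows once the component-swap has happened an even number of times and the net grading shift is a multiple of $\rootn$. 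I would close by checking the base case $\rootn=2$ against \eqref{eqtn_main_SOD} as a sanity check.
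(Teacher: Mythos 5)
Your strategy — use the chain of decompositions from Theorem~\ref{thm_SOD_for_r_stack} to control the right duals and then invoke Proposition~\ref{prop_intertwinement} to absorb the accumulated twist — is exactly the paper's strategy. But your bookkeeping of the twist goes wrong in a way that matters, and the attempted repair at the end is not valid.

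Definition~\ref{def_periodic_SOD} is about the two-component decomposition $\langle p^*\dcat(X),\mathcal{D}\rangle$, and each ``right dual'' is a single application of $R\colon\langle\cA,\cB\rangle\mapsto\langle\cB,R_\cB\cA\rangle$, not $\rootn$ (or $\rootn-1$) separate mutations. Theorem~\ref{thm_SOD_for_r_stack} tells you directly that the \emph{first} right dual is $\langle\mathcal{D},\,p^*\dcat(X)\otimes\cO\langle\rootn-1\rangle\rangle$. The \emph{second} right dual must have first component $p^*\dcat(X)\otimes\cO\langle\rootn-1\rangle$, and since twisting the original decomposition by $\cO\langle\rootn-1\rangle$ gives the semiorthogonal decomposition $\langle p^*\dcat(X)\otimes\cO\langle\rootn-1\rangle,\,\mathcal{D}\otimes\cO\langle\rootn-1\rangle\rangle$, the second component is forced to be $\mathcal{D}\otimes\cO\langle\rootn-1\rangle$. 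So two right duals give the twist of the original by $\cO\langle\rootn-1\rangle$ — \emph{not} by $\cO\langle 2\rootn-2\rangle$ as you claim. You are, in effect, double-counting: you attribute a twist by $\cO\langle\rootn-1\rangle$ to each of the two ``periods'' and then multiply, but the twist only accumulates once per pair of right duals.

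Carrying the correct count forward, the $\nth[2k]$ right dual is the twist by $\cO\langle k(\rootn-1)\rangle$, and at $k=\rootn$ you get $\cO\langle\rootn(\rootn-1)\rangle$, which \emph{is} a multiple of $\rootn$; Proposition~\ref{prop_intertwinement} then says this twist preserves $p^*\dcat(X)$ and $\mathcal{D}$ as subcategories, so the $\nth[2\rootn]$ right dual is literally the original decomposition. Your version lands instead on a residual twist of $\cO\langle 2\rootn-2\rangle\equiv\cO\langle\rootn-2\rangle$, which is \emph{not} a multiple of $\rootn$ for $\rootn>2$, so it does not preserve the decomposition. The hand-wave that ``mutations are defined only up to the $\cO\langle\cdot\rangle$-action'' is false: the right dual of a semiorthogonal decomposition is a specific decomposition, determined by orthogonality, and the residual twist is an honest obstruction that must genuinely vanish (mod~$\rootn$) for the argument to close. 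Fix the arithmetic to $\cO\langle\rootn(\rootn-1)\rangle$ and the gap disappears.
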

\begin{proof}
	By Theorem \ref{thm_SOD_for_r_stack}, the decomposition right dual to $\langle p^*\dcat(X), \mathcal{D} \rangle$ is 
	\[\sodbrack\langle \mathcal{D}\sodcomma p^*\dcat(X)\otimes \cO_{\stack[\rootn]}\langle \rootn-1 \rangle  \sodbrack\rangle\]
	and so the second right  dual is  \[\sodbrack\langle p^*\dcat(X)\otimes \cO_{\stack[\rootn]}\langle \rootn-1 \rangle\sodcomma \mathcal{D} \otimes \cO_{\stack[\rootn]} \langle \rootn-1 \rangle\sodbrack\rangle,\]
	i.e.\ the twist of the original decomposition by $\cO_{\stack[\rootn]} \langle \rootn-1 \rangle$. It follows that the $\nth[2k]$~right~dual is the twist of the original decomposition by~$\cO_{\stack[\rootn]}\langle k(\rootn-1) \rangle$. But then, using Proposition~\ref{prop_intertwinement}, the $\nth[2\rootn]$ right dual decomposition is the original decomposition, namely $\langle p^*\dcat(X), \mathcal{D} \rangle$.
\end{proof}

Finally we describe the gluing functors for the periodic decomposition above.

\begin{proposition}\label{prop_gluing_functor}
	The gluing functor $\mathcal{D} \to \dcat(X)$ for the decomposition \eqref{eqtn_2r_per_SOD} after restriction to $ i_*q^*\dcat(D)\otimes \cO\langle k \rangle \subset \mathcal{D}$ is $i_{D*}[1]$ for $k=0$, and zero otherwise.
\end{proposition}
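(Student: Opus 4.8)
The plan is to compute the gluing functor of \eqref{eqtn_2r_per_SOD} by hand from adjunctions. Write $\mathcal{A}=p^*\dcat(X)$, so that \eqref{eqtn_2r_per_SOD} reads $\dcat(\stack[\rootn])=\langle\mathcal{A},\mathcal{D}\rangle$ with $\Hom(\mathcal{D},\mathcal{A})=0$. For an object $T$ the decomposition triangle then has the shape $B_T\to T\to A_T\to B_T[1]$ with $A_T\in\mathcal{A}$ and $B_T\in\mathcal{D}$, so the gluing morphism is the connecting map $A_T\to B_T[1]$ and the gluing functor $\mathcal{D}\to\mathcal{A}$ is $\iota_{\mathcal{A}}^R\iota_{\mathcal{D}}[1]$. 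The component $\mathcal{A}=p^*\dcat(X)$ is admissible in $\dcat(\stack[\rootn])$ with $\iota_{\mathcal{A}}^R$ given by $T\mapsto p^*p_*T$: indeed $p^*$ is fully faithful by Theorem~\ref{thm_SOD_for_r_stack}, whence $\Hom_{\dcat(\stack[\rootn])}(p^*F,T)\simeq\Hom_{\dcat(X)}(F,p_*T)\simeq\Hom_{\mathcal{A}}(p^*F,p^*p_*T)$. Under the identification $p^*\colon\dcat(X)\xrightarrow{\sim}\mathcal{A}$, the gluing functor restricted to $i_*q^*\dcat(D)\otimes\cO\langle k\rangle\subset\mathcal{D}$ therefore sends $E\in\dcat(D)$ to $p_*\big(i_*q^*E\otimes\cO\langle k\rangle\big)[1]$, and it remains to evaluate $p_*\big(i_*q^*E\otimes\cO_{\stack[\rootn]}\langle k\rangle\big)$ for $0\le k\le\rootn-2$.

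I would then peel off the geometry. Since $i$ is a $\mathbb{G}_m$-equivariant closed immersion and $\cO\langle k\rangle$ is induced by $\mathbb{G}_m$-weight $k$ on both root stacks, $i^*\cO_{\stack[\rootn]}\langle k\rangle\simeq\cO_{\sqrt[\rootn]{\cO_D(D)}}\langle k\rangle$, so the projection formula gives $i_*q^*E\otimes\cO\langle k\rangle\simeq i_*\big(q^*E\otimes\cO\langle k\rangle\big)$. Commutativity of the base change square \eqref{eqn.basechange} gives $p_*i_*\simeq i_{D*}q_*$, and the projection formula for $q$ gives $q_*\big(q^*E\otimes\cO\langle k\rangle\big)\simeq E\otimes q_*\cO_{\sqrt[\rootn]{\cO_D(D)}}\langle k\rangle$. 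Combining, $p_*\big(i_*q^*E\otimes\cO\langle k\rangle\big)\simeq i_{D*}\big(E\otimes q_*\cO_{\sqrt[\rootn]{\cO_D(D)}}\langle k\rangle\big)$, so the proposition is reduced to the claim that $q_*\cO_{\sqrt[\rootn]{\cO_D(D)}}\langle k\rangle$ equals $\cO_D$ for $k=0$ and vanishes for $1\le k\le\rootn-2$.

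For this last point I would use the quotient presentation $\sqrt[\rootn]{\cO_D(D)}\simeq[\mathcal{M}^\circ/\mathbb{G}_m]$ of Proposition~\ref{prop_glob_quot_for_line_bundle}, with $\mathcal{M}=\cO_D(D)$, the $\mathbb{G}_m$-action on the fibres of $\mathcal{M}^\circ$ of weight $-\rootn$, and $\cO\langle k\rangle$ the trivial line bundle with $\mathbb{G}_m$-weight $k$. Pushing forward along $q$ extracts the $\mathbb{G}_m$-weight-$k$ graded piece of $\pi_*\cO_{\mathcal{M}^\circ}\simeq\bigoplus_{j\in\mathbb{Z}}\mathcal{M}^{\otimes(-j)}$, where $\pi\colon\mathcal{M}^\circ\to D$ is the bundle projection and the $j$-th summand carries $\mathbb{G}_m$-weight $-\rootn j$ (compare Remark~\ref{rem_O_r_for_inv_sh}). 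Hence $q_*\cO\langle k\rangle$ vanishes unless $\rootn\mid k$; in the range $0\le k\le\rootn-2$ this leaves only $k=0$, where $q_*\cO_{\sqrt[\rootn]{\cO_D(D)}}\simeq\cO_D$ with no higher direct images, as $\mu_\rootn$ is linearly reductive in characteristic zero. This yields $i_{D*}[1]$ on the $k=0$ component and $0$ on the others, as claimed. The steps needing real care are fixing the shift $[1]$ in the gluing functor (as opposed to $\iota_{\mathcal{A}}^R\iota_{\mathcal{D}}$ itself) and the equivariant identification $i^*\cO\langle k\rangle\simeq\cO\langle k\rangle$ that makes the projection formula applicable; the weight bookkeeping for $q_*\cO\langle k\rangle$ is then a direct computation.
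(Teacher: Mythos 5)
Your proposal is correct and follows the same overall strategy as the paper's proof: identify the gluing functor as $p_*\iota_{\mathcal{D}}[1]$ (the paper cites \cite[Section~2.2]{KuzLun} for the shift, whereas your justification from the decomposition triangle is informal but arrives at the right formula), then use the projection formula and base change along the square \eqref{eqn.basechange} to reduce to the claim that $q_*\big(q^*(-)\otimes\cO\langle k\rangle\big)\simeq\textrm{Id}$ for $k=0$ and vanishes for $k=1,\ldots,\rootn-2$. The one genuine difference is how this last claim is established. The paper appeals to the mutual orthogonality of the essential images $q^*\dcat(D)\otimes\cO\langle k\rangle$, citing the proof of \cite[Theorem~1.5]{IshUed}, plus a local $\mu_\rootn$-invariants calculation for $q_*q^*\simeq\textrm{Id}$. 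You instead push the projection formula one step further, reducing to $q_*\cO\langle k\rangle$ and reading it off from the $\mathbb{Z}$-grading of the pushforward $\bigoplus_{j\in\mathbb{Z}}\mathcal{M}^{\otimes(-j)}$ of $\cO_{\mathcal{M}^\circ}$ to $D$; this is more self-contained and avoids the external citation. One bookkeeping caveat: with the conventions of Remark~\ref{rem_O_r_for_inv_sh}, which gives $\cO\langle-\rootn\rangle\simeq q^*\mathcal{M}$ so that $\mathcal{M}$ sits in weight $-\rootn$, the summand $\mathcal{M}^{\otimes(-j)}$ carries $\mathbb{G}_m$-weight $+\rootn j$, and $q_*\cO\langle k\rangle$ extracts the weight-$(-k)$ piece. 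Each of your two stated signs is flipped relative to this, but the two flips cancel, so the vanishing for $k\in\{1,\ldots,\rootn-2\}$ and the value $\cO_D$ at $k=0$ come out correctly.
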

\begin{proof}
	Write $\sodembed_{\mathcal{D}}$ for the embedding $\mathcal{D} \to \dcat{\stackbrack(\stack[\rootn]\stackbrack)}$. Then the gluing functor for~\eqref{eqtn_2r_per_SOD} is $p_* \sodembed_{\mathcal{D}}[1]$ by \cite[Section 2.2]{KuzLun}. As
	\[ p_*\big(-\otimes \cO_{\stack[\rootn]}\langle k \rangle\big)i_*q^*\simeq p_*i_*\big(q^*(-) \otimes \cO_{\sqrt[\rootn]{\cO_D(D)}}\langle k \rangle\big) \simeq i_{D*}q_*\big(q^*(-) \otimes \cO_{\sqrt[\rootn]{\cO_D(D)}}\langle k \rangle\big),\]
	the statement follows from 
	\[
	q_*\big(q^*(-)\otimes \cO_{\sqrt[\rootn]{\cO_D(D)}}\langle k \rangle\big) = \left\{\begin{array}{ll}\textrm{Id} & \textrm{for } k=0\\
	0 & \textrm{for } k=1, \ldots, \rootn-2. \end{array} \right.
	\]
	Local calculations of $\mu_\rootn$-invariants show that $q_*q^* \simeq  \textrm{Id}_{\dcat(D)}$. The claimed vanishing for $k =1, \ldots, \rootn-2$ follows from the mutual orthogonality  of the essential images of $q^*(-)\otimes \cO_{\sqrt[\rootn]{\cO_D(D)}}\langle k \rangle$  for $k = 0, \ldots, \rootn-2$ in $\dcat{\stackbrack(\sqrt[\rootn]{\cO_D(D)}\stackbrack)}$,
	see Remark \ref{rem_O_r_for_inv_sh} and the proof  of \cite[Theorem 1.5]{IshUed}.
\end{proof}

As already discussed in Section~\ref{sec.vgit}, T.~Dyckerhoff, M.~Kapranov and V.~Schechtman consider $N$-periodic semiorthogonal decompositions of stable infinity categories as well as $N$-spherical functors of such categories~\cite{DKS}. In particular, a 4-spherical functor is an analogue of a spherical functor of DG categories. In \cite[Theorem~4.2.1]{DKS} the authors prove that a functor is $N$-spherical if and only if it is the gluing functor for an $N$-periodic semiorthogonal decomposition. Motivated by their work we give the following.
\begin{definition}
	The gluing functor  $\sodembed_{\mathrm{A}}^R\sodembed_{\mathrm{B}}$ of an $N$-periodic semiorthogonal decomposition \eqref{eqtn_SOD} of a triangulated category $\cC$ is \emph{$N$-triangle-spherical}.
\end{definition}
When $N$ is equal to 4, we simply say that the functor is \emph{triangle-spherical}.

\begin{corollary}\label{cor_div_is_sph} 
	The functor $i_{D*}\colon \dcat{(D)}\to\dcat{(X)}$ is triangle-spherical. The unit and the counit for the $i_D^*\dashv i_{D*}$ adjunction fit into functorial exact triangles \eqref{eqtn_cotw_for_divisor} and \eqref{eqtn_tr_for_divisor}.
\end{corollary}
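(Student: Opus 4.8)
The plan is to deduce everything from the $4$-periodic decomposition of $\dcat{\stackbrack(\stack\stackbrack)}$ established in the $\rootn = 2$ case of Theorem \ref{thm_2r_period}, recorded explicitly in \eqref{eqtn_main_SOD}, combined with the gluing functor computation of Proposition \ref{prop_gluing_functor}. First I would specialize to $\rootn = 2$: here the subcategory $\mathcal{D}$ is simply $i_* q^* \dcat(D)$, which is equivalent to $\dcat(D)$ via $i_* q^*$ (using $q_* q^* \simeq \Id$ as in the proof of Proposition \ref{prop_gluing_functor}), and $p^* \dcat(X)$ is equivalent to $\dcat(X)$. So the $4$-periodic decomposition \eqref{eqtn_main_SOD} is an instance of \eqref{eqtn_4-per_SOD} with $\mathrm{A} = p^*\dcat(X)$ and $\mathrm{B} = i_* q^* \dcat(D)$, and hence, by the general principle recalled in the introduction (following \cite{KapSche, HLShipman}), the gluing functor $\sodembed_{\mathrm{A}}^R \sodembed_{\mathrm{B}}$ is triangle-spherical in the sense just defined.

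Next I would identify this gluing functor with $i_{D*}$ up to shift and the equivalences above. By Proposition \ref{prop_gluing_functor} with $\rootn = 2$, the only surviving piece is $k = 0$, and the gluing functor $\mathcal{D} \to \dcat(X)$ is $i_{D*}[1]$ after transporting along the equivalences $\dcat(D) \xrightarrow{\sim} \mathcal{D}$ and $\dcat(X) \xrightarrow{\sim} p^*\dcat(X)$. Since shift and equivalence do not affect triangle-sphericity, it follows that $i_{D*} \colon \dcat(D) \to \dcat(X)$ is triangle-spherical. This gives the first sentence of the corollary.

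For the second sentence, I would extract the functorial triangles \eqref{eqtn_cotw_for_divisor} and \eqref{eqtn_tr_for_divisor} from the functorial triangles \eqref{eqtn_cotwist_for_4-per} and \eqref{eqtn_twist_for_4-per} attached to the $4$-periodic decomposition \eqref{eqtn_main_SOD}, by computing the relevant mutation functors explicitly. Concretely, the cotwist triangle \eqref{eqtn_cotwist_for_4-per} on $\mathrm{A} = p^*\dcat(X)$ involves the composite $\sodembed_{\mathrm{A}}^R \sodembed_{\mathrm{C}} \sodembed_{\mathrm{C}}^R \sodembed_{\mathrm{A}}$, where $\mathrm{C}$ is the third term of \eqref{eqtn_main_SOD}, namely $p^*\dcat(X) \otimes \cO\langle 1 \rangle$; using Proposition \ref{prop_intertwinement} (which says $-\otimes \cO\langle 2 \rangle$ intertwines with $-\otimes \cO_X(-D)$ via $p^*$) this composite should be identified with $-\otimes \cO_X(-D)$, reproducing \eqref{eqtn_cotw_for_divisor}. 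Similarly the twist triangle \eqref{eqtn_twist_for_4-per} on $\mathrm{B} = i_* q^* \dcat(D)$, via the intertwinement with $-\otimes \cO_D(-D)$ and tracking a shift by $[2]$, should reproduce \eqref{eqtn_tr_for_divisor}. The main obstacle I expect is precisely this bookkeeping: pinning down the mutation functors, the directions of the adjunctions, and the shifts so that the abstractly-produced triangles match \eqref{eqtn_cotw_for_divisor} and \eqref{eqtn_tr_for_divisor} on the nose — in particular getting the $[2]$ in \eqref{eqtn_tr_for_divisor} to appear, which should come from the two mutations composing in \eqref{eqtn_twist_for_4-per} together with the degree shift in the gluing functor of Proposition \ref{prop_gluing_functor}. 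Alternatively, one could sidestep part of this by invoking the unbounded-category result \cite[Theorem 3.1]{BodBon}, which already furnishes \eqref{eqtn_cotw_for_divisor} and \eqref{eqtn_tr_for_divisor}, and restricting to bounded coherent categories; but the self-contained route through \eqref{eqtn_main_SOD} is more in the spirit of the paper.
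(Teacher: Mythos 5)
Your proposal matches the paper's proof: the triangle-sphericity follows from the $4$-periodic decomposition \eqref{eqtn_main_SOD} once the gluing functor is identified with $i_{D*}$ via Proposition~\ref{prop_gluing_functor}, and the triangles \eqref{eqtn_cotw_for_divisor}, \eqref{eqtn_tr_for_divisor} are extracted from \eqref{eqtn_cotwist_for_4-per}, \eqref{eqtn_twist_for_4-per} by identifying the compositions of mutation functors through Proposition~\ref{prop_intertwinement}. The shift bookkeeping you correctly flag as the remaining step is settled in the paper by evaluating each single mutation functor on the structure sheaf: on the $\mathrm{A}$ side $p_*(p^*\cO_X\otimes\cO\langle -1\rangle)$ lands in $\textrm{Coh}(X)$ so the mutation is $-\otimes\cO\langle 1\rangle$ with no shift, while on the $\mathrm{B}$ side $q_*i^!(i_*q^*\cO_D\otimes\cO\langle 1\rangle)$ lands in $\textrm{Coh}(D)[-1]$ so the mutation carries a $[1]$, and composing two mutations with Proposition~\ref{prop_intertwinement} then yields $-\otimes\cO_X(-D)$ and $-\otimes\cO_D(-D)[2]$ respectively.
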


\begin{proof} 
	Consider the 4-periodic semiorthogonal decomposition \eqref{eqtn_main_SOD}. By definition, the gluing functor $\sodembed_{\mathrm{A}}^R\sodembed_{\mathrm{B}}$ is~triangle-spherical, and $\sodembed_{\mathrm{A}}^R\sodembed_{\mathrm{B}} = p_*i_*q^* \simeq i_{D*}$ (see the proof of Proposition~\ref{prop_gluing_functor}). This gluing functor has left adjoint $\sodembed_{\mathrm{B}}^L\sodembed_{\mathrm{A}} \simeq i_D^*$. By \eqref{eqtn_cotwist_for_4-per} and \eqref{eqtn_twist_for_4-per}, the cones of the $i_D^*\dashv i_{D*}$ adjunction unit and counit are compositions of right mutations.
	
	For the pair of semiorthogonal decompositions 
	\begin{equation}\label{eqtn_1_2_main_SOD}
	\dcat{\stackbrack(\stack \stackbrack)}  = \sodbrack\langle p^*\dcat(X) \sodcomma  i_*q^*\dcat(D) \sodbrack\rangle = \sodbrack \langle i_*q^*\dcat(D) \sodcomma p^*\dcat(X)\otimes \cO_{\stack}\langle 1 \rangle \sodbrack \rangle
	\end{equation}
   the right mutation of $p^*\dcat{(X)}$ is, up to shift, given  by $-\otimes \cO_{\stack}\langle 1 \rangle \colon p^*\dcat{(X)} \to p^*\dcat{(X)} \otimes \cO_{\stack}\langle 1 \rangle$. Indeed, the functor is an equivalence of the right and the left 
	orthogonal complements to $i_*q^*\dcat{(D)}$. In order to determine the shift, we evaluate $\sodembed_{\mathrm{C}}^R\sodembed_{\mathrm{A}} \simeq p_*(p^*(-)\otimes \cO_{\stack}\langle -1 \rangle)$ at $\cO_X$. As $\cO_X$ is locally free and the morphism $\cV\to X$ in Proposition \ref{prop_global_dscrp_of_root} is affine, $p_*(p^*\cO_X \otimes \cO_{\stack}\langle -1 \rangle)\in \textrm{Coh}(X)$. It follows that $-\otimes \cO_{\stack}\langle 1 \rangle$ is the mutation functor for \eqref{eqtn_1_2_main_SOD} . It is also the mutation for \eqref{eqtn_1_2_main_SOD} twisted by $\cO_{\stack}\langle 1 \rangle$. Hence, $-\otimes \cO_{\stack}\langle 2 \rangle$ is the composition of mutations between $p^*\dcat{(X)}$ and its fourth left orthogonal complement in $\dcat{\stackbrack(\stack\stackbrack)}$. By Proposition \ref{prop_intertwinement}, it is the image under $p^*$ of the equivalence $-\otimes \cO_X(-D)\colon \dcat{(X)} \to \dcat{(X)}$. We conclude that $\sodembed_{\mathrm{A}}^R{\sodembed_{\mathrm{C}}}\sodembed_{\mathrm{C}}^R{\sodembed_{\mathrm{A}}} \simeq - \otimes \cO_X(-D)$ and \eqref{eqtn_cotw_for_divisor} follows.
	
	Analogously, for the pair of semiorthogonal decompositions
	\begin{equation}\label{eqtn_2_3_main_SOD}
	\begin{aligned}
	\dcat{\stackbrack(\stack \stackbrack)}   &= \sodbrack \langle i_*q^*\dcat(D) \sodcomma p^*\dcat(X)\otimes \cO_{\stack}\langle 1 \rangle \sodbrack \rangle \\
	&= \sodbrack\langle p^*\dcat(X) \otimes \cO_{\stack} \langle 1 \rangle \sodcomma  i_*q^*\dcat(D) \otimes \cO_{\stack} \langle 1 \rangle \sodbrack\rangle
	\end{aligned}
	\end{equation}
	 the right mutation of $i_*q^*\dcat{(D)}$ is, up to shift, given by $-\otimes \cO_{\stack} \langle 1 \rangle$. To determine the shift, we check that $\sodembed_{\mathrm{B}}^R\sodembed_{\mathrm{D}} \simeq q_*i^!(i_*q^*(-)\otimes \cO_{\stack}\langle 1 \rangle)$ maps $\cO_D$ to an object of $\textrm{Coh}(D)[-1]$. We conclude that $-\otimes \cO_{\stack}\langle 1 \rangle[1]$ is the mutation functor for \eqref{eqtn_2_3_main_SOD}. It~is also the mutation for \eqref{eqtn_2_3_main_SOD} twisted by $\cO_{\stack}\langle 1 \rangle$. Hence, $-\otimes \cO_{\stack}\langle 2 \rangle[2]$ is the composition of mutations between $i_*q^*\dcat{(D)}$ and its fourth left orthogonal complement in $\dcat{\stackbrack(\stack\stackbrack)}$. By Proposition \ref{prop_intertwinement}, it is the image under $i_*q^*$ of the equivalence \mbox{$-\otimes \cO_D(-D)[2]\colon \dcat{(D)} \to \dcat{(D)}$}. We conclude that $\sodembed_{\mathrm{B}}^L{\sodembed_{\mathrm{D}}} \sodembed_{\mathrm{D}}^L{\sodembed_{\mathrm{B}}} \simeq - \otimes \cO_D(-D)[2]$ and~\eqref{eqtn_tr_for_divisor} follows.
\end{proof}

\opt{no-manuscripta}{\newpage}
\opt{manuscripta}{\noindent \emph{No datasets were generated or analysed during the current work. On behalf of all authors, the corresponding author states that there is no conflict of interest.}}

\bibliographystyle{alpha}
\bibliography{ref}

\end{document}